\DeclareMathAlphabet{\mathpzc}{OT1}{pzc}{m}{it}
\newtheorem{theorem}{Theorem}[section]}
\newtheorem{lemma}[theorem]{Lemma}}
\newtheorem{proposition}[theorem]{Proposition}}
\newtheorem{corollary}[theorem]{Corollary}}
\newtheorem{definition}[theorem]{Definition}}
\newtheorem{remark}[theorem]{Remark}}
\newenvironment{proof}{\noindent\textbf{Proof:\ }}{$\hfill{\bullet}$}
\newcommand{\RomanNumeralCaps}[1]
    {\MakeUppercase{\romannumeral #1}}
\numberwithin{equation}{section}
\title{\textsc{A Laplacian on the Full Shift Space}}                
\author{Shrihari Sridharan \\ {\tt shrihari@iisertvm.ac.in}  \bigskip \\ Sharvari Neetin Tikekar \\ {\tt sharvai.tikekar14@iisertvm.ac.in}
 \bigskip \\ 
\small Indian Institute of Science Education and Research \\ \small Thiruvananthapuram(IISER-TVM), India.}     
\date{}
\begin{document}

\maketitle
\thispagestyle{empty}

\begin{abstract}
\noindent
In this paper, we consider the one-sided shift space on finitely many symbols and extend the theory of what is known as rough analysis. We define difference operators on an increasing sequence of subsets of the shift space that would eventually render the Laplacian on the space of real-valued continuous functions on the shift space. We then define the Green's function and the Green's operator that come in handy to solve the analogue to the Dirichlet boundary value problem on the shift space. 
\end{abstract} 
\bigskip \bigskip

\begin{tabular}{l c l}
\textbf{Keywords} & : & Symbolic space \\ & & Laplace operator \\ & & Green's function and Green's operator \\
\textbf{AMS Subject Classifications} & : & 37B10, 31E05, 39A70, 47B60.
\end{tabular}
\bigskip 
\bigskip

\maketitle

\section{Introduction}
\label{introduction}

\noindent
Symbolic dynamics is a relatively new and popular branch of dynamical systems. It is considered as an effective tool in the study of general dynamical systems. The original dynamical system is discretised by equipartitioning the phase space into finitely many subsets, each represented by a different symbol. The trajectory of a point is then observed by tracking the symbols, corresponding to the sets in the partition the point visits, at a given time. This process generates an infinite sequence over finitely many symbols, determined by the partition. The space of all such sequences obtained for a particular labeling of the partition, is known as the symbolic space. Each sequence in the space is a symbolic trajectory corresponding to the dynamical trajectory of a point in the original system.
\medskip

\noindent
The first successful attempt to apply the techniques of symbolic dynamics was made by Hadamard in 1898, to investigate geodesic flows on negatively curved surfaces, in \cite{hadamard}. Forty years later, in \cite{MH}, the term symbolic dynamics was formally proposed by Morse and Hedlund. This foundational work marked the beginning of the study of symbolic dynamics, in its own right. Morse and Hedlund in \cite{MH} analysed the dynamics on the symbolic space in its independent abstract dynamical setting.  Having assigned a metric to the symbolic space, they established that the space is perfect, compact and totally disconnected. They further studied the transitivity and recurrence properties of the dynamics on the space, which are fundamental to any kind of dynamical systems. In 1940, Shannon employed these spaces to model the data and information channels in the theory of communication, in \cite{shannon}. Since then, the branch has found a wide range of applications in ergodic theory, complex dynamics, topological dynamics, number theory, information theory \emph{etc}. Interested readers may refer to \cite{bks, bmn, kitchens, LM} for a detailed literature on the study of symbolic dynamics and its various applications.   
\medskip

\noindent
In this work, we focus on studying the symbolic space on its own merit. It is well known among the dynamicists that the symbolic space can be used to model many naturally occurring non-smooth objects like fractals. The Sierpi\'nski gasket is a popular model of a fractal, that one obtains through an iterated function scheme. Investigations into the analytical study of the Sierpi\'nski gasket through the probabilistic approach were made by Goldstein \cite{goldstein}, Kusuoka \cite{kusuoka} and Barlow and Perkins \cite{barperk}, where the authors constructed a Laplacian on the Sierpi\'nski gasket as a Brownian motion. Kigami formulated a more direct Laplacian on the Sierpi\'nski gasket in \cite{kigami89} and later generalized it for a class of post critically finite self-similar sets in \cite{kigamipcf}. We first summarize the method here.
\medskip

\noindent
Let us begin by constructing the one sided full shift space on $N>1$ symbols. For the symbol set, $S := \{ 1, 2, \cdots, N \}$, consider the space of one sided sequences as,
\[ \Sigma_{N}^{+}\ \ :=\ \ S^{\mathbb{N}}\ \ =\ \ \big\{ x = (x_{1}\, x_{2}\, \cdots)\ :\ x_{i} \in S \big\}. \] 
The \emph{shift operator} $\sigma : \Sigma_{N}^{+} \longrightarrow \Sigma_{N}^{+}$ given by $\sigma (\, (x_{1}\, x_{2}\, x_{3}\cdots)\, ) = ( x_{2}\, x_{3} \cdots)$ is an $N$-to-$1$ continuous transformation. The inverse branches of $\sigma$ are given by, $\sigma_{l} : \Sigma_{N}^{+} \longrightarrow \Sigma_{N}^{+}$ for $l \in S$, which are defined as, $\sigma_{l}\, (x_{1}\, x_{2}\, \cdots\, ) = (l\, x_{1}\, x_{2}\, \cdots\, )$. The pair $(\Sigma_{N}^{+},\, \sigma)$ is known as the \emph{one sided full shift space}. 
\medskip

\noindent
Let $V$ be a set. Let $\ell(V) := \left\lbrace u \, | \, u : V \longrightarrow \mathbb{R}  \right\rbrace$ be the set of all real valued functions on $V$ and  $\mathcal{C}(V)$ denote the set of all real valued continuous functions on $V$. If $V$ is a finite set, then the standard \emph{inner product} on $\ell(V)$, denoted by $\left\langle u,v \right\rangle$, is defined as, 
\[ \left\langle u,\; v \right\rangle\ \ :=\ \ \sum\limits_{p\, \in\, V} u(p) v(p). \] 
For any $q \in V$, its \emph{characteristic function} is defined as,
\[ \chi_{q} (p) =  \begin{cases}
1 & \text{ if}\ p =  q, \\ 
0 & \text{otherwise.}
\end{cases} \] 

\noindent
Let $(K,\; S,\; \{ F_{l} \}_{l\, \in\, S} )$ be a self-similar structure, where $K$ is a compact metrizable topological space, $F_{l} : K \longrightarrow K$ is a continuous injection for each $l \in S$ and there exists a continuous surjection $\pi : \Sigma_{N}^{+} \longrightarrow K$ such that $F_{l} \circ \pi \, = \, \pi \circ \sigma_{l}$ for each $l \in S$. $K$ is called a \emph{post critically finite set} (p.c.f., for short) if the set $P = \bigcup\limits_{n \ge 1} \sigma^{n}  \, \Big( \pi^{-1} \big( \bigcup\limits_{ l \neq j} (F_{l}(K) \cap F_{j}(K))  \big) \Big)$ is finite. Kigami in \cite{kigaminetwork} constructed a compatible sequence $(V_{m}, \, H_{m}) $ of resistance networks (see \cite{doylesnell} for probabilistic study of concepts of electrical networks) on $K$, where for each $m \ge 0$, $V_{m}$ is a finite set with $V_{m} \subset V_{m+1} \subset K $  and $ H_{m}: \ell(V_{m}) \longrightarrow \ell(V_{m})$ is a non-positive definite symmetric linear operator known as the Laplacian. Each $H_{m}$ induces a natural Dirichlet form $\mathcal{E}_{H_{m}}$ on $\ell (V_{m})$. Moreover, the set $V_{*} = \bigcup\limits_{m\, \ge\, 0} V_{m} $ is dense in $K$. Interested readers may refer to \cite{kigamipcf, kigaminetwork, kigamifrac} for definitions and fundamental properties of the Dirichlet forms and difference operators. For the compatibility of the sequence $\{(V_{m},\, H_{m}) \}_{m\, \ge\, 0}$, the following must hold.
\begin{equation}
\label{compatibility_energy}
\mathcal{E}_{H_{m}}(u,u) \ = \ \min \left\lbrace \,\mathcal{E}_{H_{m+1}}(v,v)\, : \, v \in \ell(V_{m+1}), \, v|_{V_{m}} = u \, \right\rbrace \ \ \text{for all}\ \  m \ge 0.
\end{equation}
The `energy' or `resistance form' on $\ell(V_{*})$ is then defined as, 
\[ \mathcal{E}(u,\, v)\ \ =\ \ \lim\limits_{m\, \to\, \infty} \mathcal{E}_{H_{m}} (u|_{V_{m}},\, v|_{V_{m}}),\ \ \ \text{for}\ \ u, v \in \ell(V_{*}), \] 
whenever the limit is finite. In the study of analysis on the general framework of resistance networks, the set $V_{*}$, which is merely a countable set, does not have any topology. To overcome this difficulty, Kigami in \cite{kigamimetric, kigaminetwork} defined and studied a metric on the resistance networks, popularly known as \emph{effective resistance}, given by,
\[R(p,q) \ := \ [\, \min \left\lbrace \mathcal{E}(u, u)\, : \, u(p)=1,\, u(q)=0    \right\rbrace \,] ^{-1}, \ \ \text{for} \ \ p , q \in V_{*}. \]
If $(\Omega, R)$ is the completion of $(V_{*}, R)$, then we have a Laplacian on $\Omega$ associated to the quadratic form $\mathcal{E}$. The important fact to note here is, $\Omega$ can be identified with the original space $K$, if and only if $(\Omega, R)$ is bounded, see \cite{kigamifrac}. In case of p.c.f. self-similar set $K$, this metric $R$ is compatible with the original metric on $K$. Therefore, the Laplacian on $K$ can be directly defined as the renormalized limit of the difference operators $H_{m}$. The effective resistance plays a crucial role in the theory of Laplacians and Dirichlet forms, see \cite{kigamifrac} for a comprehensive study on the topic.
\medskip

\noindent
Further fundamental properties of this Laplacian on the Sierpi\'nski gasket, like the Dirichlet and Neumann problems of the Poisson equation, complete Dirichlet spectrum, heat and wave equations were examined by Kigami \cite{kigami89}, Shima \cite{shima}, Fukushima and Shima \cite{fuku_shima}, Dalrymple, Strichartz and Vinson \cite{dsv} \textit{etc}. Also similar boundary value problems, spectral properties of the Laplacian and various physical phenomena like heat and wave propagation on different and more complex rough spaces have been studied extensively over the last few decades by several mathematicians that include Teplyaev, Alonso-Ruiz, Freiberg, Kesseb\"ohmer \cite{alonsofreiberg, alonsostrichartzetal, guostrichartzetal, hinzteplyaevetal, kessebohmer}. 
\medskip

\noindent
In \cite{denker}, Denker \textit{et al.}, considered the abstract setting of the shift space $\Sigma_{N}^{+}$ independent of its relation with fractals. Rather than constructing a nested sequence of finite sets and difference operators on them as described above, they define `thin' equivalence relations on $\Sigma_{N}^{+}$ and construct Dirichlet forms on the associated quotient spaces of $\Sigma_{N}^{+}$. This Dirichlet form gives rise to the Laplace operator on such quotient spaces. The authors also prove that Kigami's Laplacian on the Sierpi\'nski gasket can be derived as a special case to this theory.
\medskip

\noindent
In the present paper, we consider the same abstract setting of the shift space. We extract a nested sequence of finite subsets $V_{m}$ of $\Sigma_{N}^{+}$ by exploiting the dynamical aspects and the peculiar topology of the symbolic space, and define equivalence relations on each of these subsets as described in section \eqref{sec_settings}. We then define the difference operators in section \eqref{diff operator} and corresponding Dirichlet forms in section \eqref{Dirichlet forms}, on each of these sets. The set $V_{*}$ is unbounded with respect to the analogous effective resistance metric obtained using these Dirichlet forms in this setting, as will be proved in section \eqref{energy}, after introducing concepts like energy and energy minimizers. This establishes that the effective resistance is insufficient in obtaining a Laplacian on the full space $\Sigma_{N}^{+}$. Therefore we resort to the standard metric existing on the shift space, which will be defined in the following section and derive a Laplacian on $\Sigma_{N}^{+} $ as a renormalised limit of the difference operators in section \eqref{the laplacian}.
\medskip

\noindent
The second part of the paper focuses on solving a problem analogous to the Dirichlet boundary value problem, as stated below, through the standard concepts of the Green's function in section \eqref{Green's function}, the Green's operator in section \eqref{Green's operator}.
\begin{theorem} 
\label{maintheorem} 
Let $\mathcal{C} (\Sigma_{N}^{+})$ denote the Banach space of real-valued continuous functions defined on $\Sigma_{N}^{+}$ and $V_{0}$ be the set of fixed points of $\sigma$. For any $f \in \mathcal{C} (\Sigma_{N}^{+})$ and $\zeta \in \ell(V_{0})$, there exists a function $u \in \mathcal{C} (\Sigma_{N}^{+})$ in the domain of the Laplacian that satisfies 
\[ \Delta u\ \ =\ \ f\ \ \ \ \text{subject to}\ \ \ \ u|_{V_{0}}\ \ =\ \ \zeta. \] 
\end{theorem}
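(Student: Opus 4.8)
The plan is to exploit the linearity of $\Delta$ and to split the problem into a homogeneous boundary value problem carrying no source term and an inhomogeneous problem carrying vanishing boundary data. Writing $u = h + w$, I would seek a harmonic function $h$ satisfying $\Delta h = 0$ with $h|_{V_0} = \zeta$, together with a function $w$ satisfying $\Delta w = f$ with $w|_{V_0} = 0$. Since $\Delta$ is linear and both constraints are prescribed on the same set $V_0$, a solution to each subproblem furnishes a solution to the original one by addition, and it remains only to produce each piece inside $\mathcal{C}(\Sigma_N^+)$ and inside the domain of $\Delta$.

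For the homogeneous part, I would construct $h$ as the harmonic extension of the boundary datum $\zeta \in \ell(V_0)$. The compatibility condition \eqref{compatibility_energy} is precisely the assertion that the energy-minimizing extension from $V_m$ to $V_{m+1}$ is consistent across levels; iterating it produces a function $h$ on $V_* = \bigcup_m V_m$ whose restriction to every $V_m$ minimizes $\mathcal{E}_{H_m}$ subject to the prescribed values on $V_0$. Such a function satisfies $H_m h = 0$ at every interior point of each level, so that the renormalized limit defining $\Delta$ vanishes and $\Delta h = 0$; moreover the energy-minimality controls the oscillation of $h$ across the cylinders of level $m$, which, by the estimates established earlier, yields uniform continuity and hence a continuous extension $h \in \mathcal{C}(\Sigma_N^+)$ lying in the domain of $\Delta$.

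For the inhomogeneous part, I would invoke the Green's operator $G$ constructed in the later section from the level-$m$ Green's functions, which are the kernels inverting the difference operators $H_m$ on the subspace of functions vanishing on $V_0$. Setting $w = -Gf$ (with the sign dictated by the convention in the definition of $G$), the defining property of the Green's function — that applying $H_m$ to it returns, up to the renormalizing factor, minus the identity on functions supported off $V_0$ — yields $\Delta w = f$ at the formal level, while the vanishing of each level Green's function on $V_0$ gives $w|_{V_0} = 0$. Continuity of $w$ would follow by approximating the continuous datum $f$ uniformly by locally constant (cylinder) functions and invoking uniform bounds on the Green's kernel.

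The main obstacle is the passage to the limit in this last step: one must show that the renormalized sequence obtained by applying the discrete Green's functions to $f$ converges uniformly to a continuous function that genuinely lies in the domain of $\Delta$, and that the identity $\Delta w = f$ survives the limit rather than holding only at each finite level. This requires quantitative control of the Green's kernel — its boundedness and equicontinuity as $m \to \infty$ — together with a careful matching of the normalizing factors appearing in the definition of $\Delta$ as a renormalized limit of the $H_m$ against the growth of the Green's function across levels. Once this convergence is secured, the continuity of $f$ guarantees that the limiting Laplacian reproduces $f$ pointwise, and $u = h + w$ is then the desired solution.
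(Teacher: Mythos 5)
Your proposal follows essentially the same route as the paper: the solution is taken to be the harmonic (energy-minimizer) extension of $\zeta$, namely $\sum_{p \in V_{0}} \zeta(p)\,\chi_{p}^{m}$, plus $-G_{\mu}f$, with the two key facts being $\Delta(G_{\mu}f) = -f$ (Lemma \ref{deltaG_muf=-f}, which rests on the identity $H_{n}G_{\mu}f(p) = -\int \chi_{p}^{n} f\, \mathrm{d}\mu$ and the uniform continuity of $f$) and $G_{\mu}f|_{V_{0}} = 0$. The convergence issue you flag as the main obstacle is precisely what those lemmas resolve, so your outline matches the paper's argument.
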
 
\medskip

\noindent
We conclude the paper by giving a complete solution to the differential equation on this totally disconnected space in section \eqref{The Dirichlet problem}. We aim to investigate the relation between the energy and the Laplacian on $\Sigma_{N}^{+}$ in subsequent papers.  
\medskip

\section{$m$-relations in the full shift space} 
\label{sec_settings} 
The full shift space $\Sigma_{N}^{+}$ introduced in the introduction is equipped with a natural metric defined by, 
\[ d(x, y)\ \ :=\ \ \frac{1}{2^{\, \rho(x, y)}},\ \ \ \text{where}\ \ \rho(x, y)\ :=\ \min \{ i : x_{i} \ne y_{i} \}\ \ \text{with}\ \ \rho(x, x)\ :=\ \infty. \] 
This metric $d$ generates a product topology on $\Sigma_{N}^{+}$, where the symbol set $S$ is considered to have a discrete topology. In fact, for any $0 < \theta < 1$, the metric $d_{\theta} (x, y) := \theta^{\,\rho(x, y)}$ generates the same product topology. Unless otherwise mentioned, we always use the metric $d = d_{\theta}$ with $\theta = \frac{1}{2}$.  In this topology, one may observe that the open sets can be written as a countable union of cylinder sets, which are themselves both closed and open. Thus, the cylinder sets form a basis for the topology on $\Sigma_{N}^{+}$. By \emph{a cylinder set of length $m$}, we mean the set denoted by
\[ [p_{1}\, \cdots\, p_{m}]\ \ :=\ \ \left\lbrace x \in \Sigma_{N}^{+} : x_{1} = p_{1}\, ,\, \cdots\, ,\, x_{m} = p_{m} \right\rbrace, \] 
where we fix the initial $m$ co-ordinates. We wish to draw the attention of the readers to the position of the cylinder sets, which can occur anywhere in general. However, we necessitate the cylinder sets to be placed at the initial co-ordinates, as defined. The reason for the same becomes clear during the course of this section. Under the topology defined, $\Sigma_{N}^{+}$ is a totally disconnected, compact, perfect metric space on which $\sigma$ is a non-invertible, continuous surjection, that has $N$ local inverse branches for any point $x$.   
\medskip 

\noindent 
These cylinder sets form a semi-algebra that would, in turn generate the Borel sigma-algebra on $\Sigma_{N}^{+}$. The \emph{equidistributed Bernoulli measure} $\mu$ on a cylinder set of length $m$ is defined as, 
\begin{equation} 
\label{measure} 
\mu \left( [p_{1}\, \cdots\, p_{m}] \right)\ \ :=\ \ \frac{1}{N^{m}}. 
\end{equation} 

\noindent 
Moreover, the shift space $\Sigma_{N}^{+}$ has a self similar structure. Recall the inverse branches $\sigma_{l} : \Sigma_{N}^{+} \longrightarrow [l]$ for each $l \in S$ as defined in the previous section. Each branch $\sigma_{l}$ maps an element $x$ in $\Sigma_{N}^{+}$ to its preimage (under $\sigma$) that has the symbol $l$ in its first position. $\sigma_{l}$ is a contractive similarity with the contraction ratio being $\frac{1}{2}$. In fact, for distinct $l \in S$, we note that the sets $\sigma_{l} (\Sigma_{N}^{+})$ are mutually disjoint and satisfy, $\Sigma_{N}^{+} = \bigcup\limits_{l\, \in\, S} \sigma_{l}\, (\Sigma_{N}^{+})$. 
\medskip 

\noindent 
We can also generalise the above structure of self-similarity, as follows. Fix $m > 0$ and consider any finite word $w$ of length $|w| = m$ {\it i.e.,} $w = (w_{1}\, w_{2}\, \cdots\, w_{m})$. One can then define the map $\sigma_{w} := \sigma_{w_{1}} \circ \sigma_{w_{2}} \circ \cdots \circ \sigma_{w_{m}} : \Sigma_{N}^{+} \longrightarrow [w_{1}\, w_{2}\, \cdots\, w_{m}]$ which concatenates the finite word $w$ as a prefix to the elements of $\Sigma_{N}^{+}$. Again, we can write the shift space as a disjoint union given by 
\[ \Sigma_{N}^{+} = \bigcup\limits_{ \left\lbrace  w\,:\, |w|\, =\, m \right\rbrace } \sigma_{w} (\Sigma_{N}^{+}). \] 

\noindent
We now understand the shift space $\Sigma_{N}^{+}$ as the limit of an increasing sequence of finite subsets of $\Sigma_{N}^{+}$. For $l \in S$, denote the point $(l\, l\, \cdots\, ) \in \Sigma_{N}^{+}$ by $(\dot{l})$. This is a fixed point of $\sigma$ and of the corresponding map $\sigma_{l}$. Let $V_{0}$ denote the set of all fixed points of $\sigma$, namely,  
\[ V_{0}\ \ :=\ \ \left\lbrace \dot{(1)}\, ,\, \dot{(2)}\, ,\, \cdots\, ,\, \dot{(N)}\, \right\rbrace . \] 
For $m \ge 1$, we define the sets $\left\{ V_{m} \right\}_{m\, \ge\, 1}$ inductively as $V_{m} := \bigcup\limits_{l\, \in\, S} \sigma_{l}\, (V_{m - 1})$. Note that $V_{m}$ is the set of all $m$-th order pre-images of points in $V_{0}$, with cardinality $N^{m + 1}$. Further, the sequence $\{ V_{m} \}$ is increasing. Since $V_{m} = \bigcup\limits_ {\left\lbrace  w\,:\, |w|\, =\, m \right\rbrace} \sigma_{w}\, (V_{0})$, any point $p$ in $V_{m}$ is of the form $p = (p_{1}\, \cdots\, p_{m}\, p_{m + 1}\, p_{m+1}\, \cdots\, )$. We denote this point by $(p_{1}\,  \cdots\, p_{m}\, \dot{p}_{m + 1}\, )$. In particular, for a point $p \in V_{m} \setminus V_{m - 1}$, we have  $p_{m} \ne p_{m + 1}$. 
\medskip 

\noindent 
Define $V_{*} := \bigcup\limits_{m\, \ge\, 0} V_{m}$. Then, $V_{*}$ is a dense subset of $\Sigma_{N}^{+}$ in the standard topology, {\it i.e.,} for any $x = (x_{1}\, x_{2}\, \cdots\, ) \in \Sigma_{N}^{+}$, the sequence of points 
\[ \Big\{ (\dot{x}_{1}) \in V_{0};\ (x_{1}\, \dot{x}_{2}) \in V_{1};\ \cdots;\ (x_{1}\, x_{2}\, \cdots\, x_{m}\, \dot{x}_{m + 1}\, ) \in V_{m}; \cdots \Big\} \] 
converges to $x$. We note that there could be different sequences that approach $x$ in the limit; we have provided only an example of one such to establish density. On each $V_{m}$, we define an equivalence relation to characterize the closest points to a given point in $V_{m}$.
\medskip 

\noindent 
\begin{definition} Any two  points $p = (p_{1}\, p_{2}\, \cdots\, p_{m}\, \dot{p}_{m + 1}\, )$ and $q = (q_{1}\, q_{2}\, \cdots\, q_{m}\, \dot{q}_{m + 1}\, ) $ in $V_{m}$ are said to be $m$-related, denoted by $p \sim_{m} q$, if $p_{i}= q_{i}$ for $1 \le i \le m$.
\end{definition}
\medskip 

\noindent
The definition entails that any two points in $V_{0}$ are $0$-related. The $m$-related points $p, q \in V_{m}$ are obtained by the action of the same $\sigma_{w}$ on $V_{0}$. Any two points in $V_{m}$ are separated by a distance of at least $\frac{1}{2^{m + 1}}$. The $m$-relation, $\sim_{m}$ is an equivalence relation on $V_{m}$. The equivalence class of  $p$ in $V_{m}$ is the set of all $m$-related points of $p$ in $V_{m}$. We denote it by
\[ \left. [p_{1}\,\cdots\, p_{m}] \right|_{V_{m}}\ \ :=\ \ \left\lbrace (p_{1}\, p_{2}\, \cdots\, p_{m}\,\dot{l})\ :\ l \in S \right\rbrace\ \ =\ \ [p_{1}\,\cdots\, p_{m}]\, \cap\, V_{m}. \] 

\noindent 
\begin{remark}
The $m$-relation is clearly reflexive (being an equivalence relation). Therefore, when we say two points are $m$-related, we will only focus on distinct points being $m$-related. We adopt this as a convention, since reflexivity does not play any role in our analysis.
\end{remark}

\noindent 
\begin{remark} 
\label{no.of direct nbrs} 
Let $p = (p_{1}\, p_{2}\, \cdots\, p_{m}\, \dot{p}_{m + 1}) \in V_{m}$. Among all the points in $V_{m}$ other than $p$, those that are $m$-related to $p$ are the closest to $p$ at a distance $\frac{1}{2^{m + 1}}$. We define these points to be the \emph{immediate neighbours} of $p$ in $V_{m}$. We call the set of these immediate neighbours, as the deleted neighbourhood of $p$ in $V_{m}$, denoted by $\mathcal{U}_{p,\, m}$. Observe that there are precisely $N - 1$ immediate neighbours for any $p \in V_{m}$. Let us denote these neighbours by $q^{1},\, q^{2},\,\cdots, \, q^{N-1}$. 
\end{remark}

\noindent
For example, when $N = 3$, the points $(1\,\dot{2}),\, (1\,\dot{3})$ and $(\dot{1})$ in $V_{1}$ are $1$-related to each other. Thus, $\mathcal{U}_{(1\, \dot{2}),\, 1} = \{ (1\,\dot{3}),\, (\dot{1}) \}$. Similarly, the points $(3\, 3\, \dot{1}),\, (3\, 3\,\dot{2})$ and $(\dot{3})$ in $V_{2}$ are $2$-related to each other. Thus, $\mathcal{U}_{(\dot{3}),\, 2} = \{ (3\, 3\, \dot{1}),\, (3\, 3\,\dot{2}) \}$. 
\medskip 

\noindent 
\begin{remark} 
\label{m-related V_m-V_{m-1}} 
Consider $p \in V_{m} \setminus V_{m - 1}$. Among the $N - 1$ neighbours of $p$ accommodated in $\mathcal{U}_{p,\, m}$, we note that one of them denoted by $q^{N-1} = (p_{1}\, p_{2}\, \cdots\, p_{m - 1}\, \dot{p}_{m})$ comes from $V_{m - 1}$. The rest of the neighbours; $N - 2$ of them are collected in the set 
\[ U_{p,\, m}\ \ :=\ \ \left\lbrace  (p_{1}\, p_{2}\, \cdots\, p_{m}\, \dot{l}\, )\ :\ l \ne p_{m} \ \text{and}\ l \ne p_{m + 1} \right\rbrace \ = \ \left\lbrace q^{1},\, q^{2},\,\cdots, \, q^{N-2} \right\rbrace \ \subset\ \ V_{m} \setminus V_{m - 1}. \] 
One can easily verify this for the example when $N = 3$.
\end{remark} 
\medskip 

\noindent 
We make one more interesting observation in the next remark which enables us to track any point in $V_{m}$ from any point in $V_{0}$, through a chain of $k$-related points in the intermediary stages $V_{k}$. Making use of this, we can connect any two points in $V_{m}$ by a chain of $k$-related points from $V_{k}$, from each stage $0 \le k \le m$.
\medskip 

\noindent 
\begin{remark} 
\label{chain connecting pts} 
For $p \in V_{m} \setminus V_{m-1}$, choose $n_{1},\, n_{2},\, \cdots,\, n_{d} \in \mathbb{N}$ such that $1 \le n_{1} < n_{2} < \cdots < n_{d} = m$, which are the only coordinates of $p$ satisfying $p_{n_{i}} \ne p_{n_{i}+1}$. Construct the points 
\[r^{0} = (\dot{p}_{1}) \in V_{0};\ \ r^{n_{i}} = (p_{1}\, p_{2}\, \cdots\, p_{n_{i}}\, \dot{p}_{n_{i}+1}) \in V_{n_{i}} \setminus V_{n_{i} - 1}\ \ \text{and}\ \ r^{n_{d}} = p.   \]
Observe that, any $(\dot{l}) \in V_{0} $ can be connected to $p \in V_{m} \setminus V_{m - 1}$ by means of this chain of distinct points, $r^{0}, r^{n_{1}},\cdots, r^{n_{d}}$, in the sense that, 
\[ (\dot{l}) \sim_{0 } r^{0}\ \text{or} \ (\dot{l}) = r^{0} \ \ \ \ \text{and}\ \ \ \ r^{n_{i - 1}} \sim_{n_{i}} r^{n_{i}}\ \ \text{implying}\ \ r^{n_{i - 1}} \in \mathcal{U}_{r^{n_{i}},\, n_{i}}\ \ \ \text{for}\ 1 \le i \le d. \] 
\end{remark}

\noindent
Due to the peculiar topology on the space $\Sigma_{N}^{+} $, the standard notion of topological boundary becomes irrelevent. Nevertheless, as the method of construction of $\Sigma_{N}^{+}$ begins from $V_{0}$, we define the set $V_{0}$ as the \emph{boundary of $\Sigma_{N}^{+}$}.
\medskip

\section{Difference operators }
\label{diff operator}

\noindent 
In this section, we inductively define a difference operator $H_{m}$ on $\ell (V_{m})$, which gives the total difference between the functional values at a point and its neighbouring points in $V_{m}$. We also provide an easier approach to the operator so defined, by writing its matrix representation, however after defining an order amongst the finitely many points in $V_{m}$. Whenever a point $p$ appears before $q$ in the ordering of $V_{m}$, we denote it by $p \prec q$. Then a matrix for $H_{m}$ is arranged in such a way that, whenever $p \prec q$, the row or column corresponding to the point $p$ appears to the top or to the left of the row or column respectively, corresponding to the point $q$. Let $(H_{m})_{pq}$ denote the entry in the matrix $H_{m}$ corresponding to row $p$ and column $q$. The action of $H_{m}$ on $u \in \ell (V_{m})$ at a point $p \in V_{m}$ is given by, 
\[ H_{m} u (p)\ = \ \sum\limits_{q \,\in \,V_{m}}\, (H_{m})_{pq}\,u(q). \]

\noindent 
For $l \in S$, consider the point $(\dot{l}) \in V_{0}$. Observe that all other points in $V_{0}$ are at an equal distance of $\frac{1}{2}$ from $(\dot{l})$. We then define a difference operator $H_{0}$ on  $\ell (V_{0})$ as,
\begin{equation} 
\label{defn H_0} 
H_{0} u (\dot{l})\ := \ \sum\limits_{(\dot{k})\, \in\, V_{0}} \left( u (\dot{k}) - u (\dot{l}) \right) \ = \ -\, (N - 1)\, u (\dot{l}) + \sum\limits_{\substack{k\, \in\, S \\ k\, \ne\, l}} u (\dot{k}). 
\end{equation}
For $l,\, k \in S$, we define $(\dot{k}) \prec (\dot{l})$ if and only if $k < l$. Then $V_{0}$ can be written in an ascending order of elements as, 
\[ V_{0} \ = \ \left\{ (\dot{1}) \prec (\dot{2}) \prec \cdots \prec (\dot{N}) \right\}.  \]
We write $H_{0}$ as a matrix of order $N$ given by 
\begin{equation*}
\left( H_{0} \right)_{pq}\  = \ 
\begin{cases} 
1 & \text{if}\ q \in \mathcal{U}_{p,\, 0} \\
-\, (N - 1) & \text{if}\ p = q,
\end{cases} 
\end{equation*} 
where $\left( H_{0} \right)_{pq}$ denotes the entry of the matrix $H_{0}$ corresponding to the row for $p \in V_{0}$ and the column for $q \in V_{0}$. 
Thus, 
\[ H_{0}\ \ =\ \ \begin{pmatrix} 
-\, (N - 1) & 1 & 1 & \cdots & 1 \\
1 & -\, (N - 1) & 1 & \cdots & 1 \\
\vdots & & \ddots & & \\
& \vdots & & \ddots & \\
1 & 1 & 1 & \cdots & -\, (N - 1) 
\end{pmatrix}_{N \times N}. \] 
Making use of this matrix representation of $H_{0}$, we observe that for $p \in V_{0}$, we have 
\begin{equation}
\label{altH_0} 
H_{0} u  (p) \ = \ \sum_{q\, \in\, V_{0}} \left( H_{0} \right)_{pq}\, u\, (q). 
\end{equation} 
It is now an easy observation, that the two expressions for $H_{0}$ as in equation \eqref{defn H_0} and \eqref{altH_0} are the same. Having defined a difference operator, namely $H_{0}$ on $\ell (V_{0})$, we now adopt the same philosophy for defining a difference operator $H_{1}$ on $\ell (V_{1})$.  
\medskip 

\noindent 
Let $u \in \ell(V_{1})$ and consider the points $p = (p_{1}\,\dot{p}_{2}) \in V_{1} \setminus V_{0}$ and $(\dot{l}) \in V_{0}$. We define the action of $H_{1}$ on $V_{1} \setminus V_{0}$ and $V_{0}$ separately as,
\begin{eqnarray} 
\label{defn H_1}
H_{1} u(p)\ \ & := &\ \ \sum\limits_{q\, \in\, \mathcal{U}_{p,\, 1}} \left( u(q)\, -\, u(p) \right)\ \ =\ \ -\, (N - 1)\, u(p) + \sum\limits_{q\, \in\, \mathcal{U}_{p,\, 1}} u(q), \\ 
\label{defn H_1a}
H_{1} u (\dot{l})\ \ & := &\ \ -2\, (N - 1)\, u(\dot{l}) + \sum\limits_{\substack{k\, \ne\, l \\ k\, \in\, S}} u(\dot{k})\, + \, \sum\limits_{\substack{q\, \in\, V_{1} \setminus V_{0} \\ q_{1}\, =\, l}} u(q)  \\
& = &\ \ H_{0} u (\dot{l})\, -\, (N - 1)\, u(\dot{l}) \, + \sum\limits_{q\, \in\, \mathcal{U}_{(\dot{l}), 1}} u(q). \nonumber
\end{eqnarray}

\noindent 
By construction, $V_{1}$ contains all the points in $V_{0}$ and its immediate predecessors under $\sigma$. We order the elements of $V_{1}$ as follows: The elements of $V_{0}$ appear first in $V_{1}$, with the prescribed order therein. That is, we define $p \prec q$ for any $p \in V_{0}$ and $q \in V_{1}\setminus V_{0}$. We now define the order on $V_{1} \setminus V_{0}$. For $(\dot{l}) \in V_{0}$ and $i,j \in S$ with $i \ne l$ and $j \ne l$, we define $\sigma_{i}(\dot{l}) \prec \sigma_{j}(\dot{l})$ if and only if $i < j$. And for distinct $(\dot{k}), (\dot{l}) \in V_{0}$ and any $i, j \in S$, we define $\sigma_{i}(\dot{k}) \prec \sigma_{j}(\dot{l})$ if and only if $(\dot{k}) \prec (\dot{l})$. Thus the set $V_{1}$ can be arranged in an ascending order of its points as,
\begin{eqnarray*}
V_{1} \ & = & \ \bigg\{ (\dot{1})\prec \cdots \prec (\dot{N}) \prec (2 \dot{1}) \prec \cdots \prec (N \dot{1}) \prec (1 \dot{2}) \prec \cdots \prec (N \dot{2}) \prec \cdots \\
&  & \hspace{7cm} \cdots \prec (1 \dot{N}) \prec \cdots \prec (N - 1\, \dot{N}) \bigg\}.
\end{eqnarray*}

\noindent 
We now obtain a matrix representation of the operator $H_{1}$. We expect $H_{1}$ to be a square matrix of order $N^{2}$, the cardinality of $V_{1}$. For this purpose, we split $H_{1}$ into $4$ parts as follows:
\begin{equation*}
H_{1}\ \ =\ \ \begin{pmatrix}
T_{1} & J_{1}^{T} \\
J_{1} & X_{1}
\end{pmatrix},
\end{equation*}
where 
\begin{eqnarray*} 
T_{1} : \ell (V_{0}) & \longrightarrow & \ell (V_{0})\ \text{is of order}\ N, \\ 
J_{1} : \ell (V_{0}) & \longrightarrow & \ell (V_{1} \setminus V_{0})\ \text{is of order}\ (N^{2} - N) \times N, \\ 
X_{1} : \ell (V_{1} \setminus V_{0}) & \longrightarrow & \ell (V_{1} \setminus V_{0})\ \text{is of order}\ N^{2} - N. 
\end{eqnarray*}  

\noindent 
Making use of equations \eqref{defn H_1} and \eqref{defn H_1a}, we compute the entries in the relevant matrices $T_{1},\ X_{1}$ and $J_{1}$ as 
\begin{eqnarray*} 
(T_{1})_{pq}\ \ & = &\ \ 
\begin{cases} 
-\, 2 (N - 1) & \text{if}\ p = q, \\ 
1 & \text{otherwise}. \\ 
\end{cases} \\ 
(X_{1})_{pq}\ \ & = & \ \ 
\begin{cases}
-\, (N - 1) & \ \ \text{if}\ p = q, \\
1 & \ \ \text{if}\ q \in \mathcal{U}_{p, 1}, \\ 
0 & \ \ \text{otherwise}. \\
\end{cases} \\ 
(J_{1})_{pq}\ \ & = &\ \ 
\begin{cases}
1 & \qquad \qquad\ \text{if}\ q \in \mathcal{U}_{p, 1}, \\ 
0 & \qquad \qquad\ \text{otherwise}. \\
\end{cases} 
\end{eqnarray*} 

\noindent 
One can observe that the matrices $T_{1},\ J_{1}$ and $X_{1}$ satisfy the following relation: 
\[ T_{1}\ \ =\ \ H_{0} + J_{1}^{T} X_{1}^{-1} J_{1}. \] 

\noindent
We proceed inductively to define the appropriate difference operator $H_{m}$ on $u \in \ell (V_{m})$. If $p \in V_{m}$, then $p \in V_{n} \setminus V_{n-1} $ for some $1 \le n \le m$, or $p \in V_{0}$ (choose $n=0$ in that case). Then,
\begin{eqnarray} 
\label{defn H_m}
H_{m} u (p) & := & -(m-n+1)\,(N-1)\, u(p) + \sum_{i\, = \,n}^{m} \ \sum_{q \,\in\, \mathcal{U}_{p,\, i}} u(q)\\
& = & H_{m - 1} u(p)\, +\, \left[ -\, (N - 1)\, u(p)\, + \sum\limits_{q\, \in\, \mathcal{U}_{p,\, m}} u(q) \right] \nonumber. 
\end{eqnarray} 
In particular, if $p \in V_{m} \setminus V_{m - 1}$, then substituting $n = m$ in \eqref{defn H_m} we obtain,
\begin{equation} 
\label{defn_of_Hm_u}
H_{m} u\,(p)\ := \ -\, (N - 1)\, u(p)\, +\, \sum\limits_{q \, \in  \,\mathcal{U}_{p,\, m}} u(q).
\end{equation} 
To obtain the matrix representation of $H_{m}$, we order the points in $V_{m}$. Recall that any point $p \in V_{m}$ looks like $p = (p_{1}\, p_{2}\, \cdots\, p_{m}\, \dot{p}_{m + 1})$ with $p_{m} = p_{m + 1}$ if $p \in V_{m - 1}$, and $p_{m} \ne p_{m + 1}$ if $p \in V_{m} \setminus V_{m - 1}$. The order of points in $V_{m-1}$ is retained as it is in $V_{m}$. Moreover $V_{m-1}$ appears first in the ordering of $V_{m}$, that is, if $p \in V_{m-1}$ and $q \in V_{m} \setminus V_{m-1}$, then $p \prec q$. Recall that $V_{m} = \bigcup\limits_{i \in S} \sigma_{i}(V_{m-1}) $. Now, for any $p,\,q \in V_{m-1}$, we define $\sigma_{i}(p) \prec \sigma_{j}(p) $ if and only if $i < j$ and for any $i,\,j \in S$, define $ \sigma_{i}(p) \prec \sigma_{j}(q)$ if and only if $ p \prec q$. In summary, the points in $V_{m}$ can be listed in their ascending order as,  
\allowdisplaybreaks
\begin{eqnarray*} 
V_{m} & = & \Bigg\{\ \underbrace{(\dot{1}) \prec \cdots \prec (\dot{N})}_{V_{0}} \prec \underbrace{(2 \dot{1}) \prec \cdots \prec (N \dot{1}) \prec \cdots \prec (1 \dot{N}) \prec \cdots \prec (N - 1\, \dot{N})}_{V_{1} \setminus V_{0}} \prec  \\ 
& & \\
& & \hspace{+2cm} \underbrace{\cdots \prec \cdots \prec \cdots \prec \cdots \prec}_{V_{2} \setminus V_{1},\, \cdots,\, V_{m - 1} \setminus V_{m - 2}} \\
& & \hspace{+5cm} \underbrace{(\underbrace{1\, \cdots\, 1}_{m - 1}\, 2\, \dot{1}) \prec \cdots \prec (\underbrace{N\, \cdots\, N}_{m - 1}\, N - 1\, \dot{N})}_{V_{m} \setminus V_{m - 1}}\ \Bigg\}. 
\end{eqnarray*} 

\noindent 
The matrix representation for the difference operator $H_{m}$ (of order $N^{m + 1}$) defined on $\ell (V_{m})$ is split into four parts, analogous to what we did for $H_{1}$. 
\begin{equation*}
H_{m}\ \ =\ \ \begin{pmatrix} 
T_{m} & J_{m}^{T} \\
J_{m} & X_{m}
\end{pmatrix}
\end{equation*}
where 
\begin{eqnarray*} 
T_{m} : \ell (V_{m - 1}) & \longrightarrow & \ell (V_{m - 1})\ \text{is of order}\ N^{m}, \\ 
J_{m} : \ell (V_{m - 1}) & \longrightarrow & \ell (V_{m} \setminus V_{m - 1})\ \text{is of order}\ (N^{m + 1} - N^{m}) \times N^{m}, \\ 
X_{m} : \ell (V_{m} \setminus V_{m - 1}) & \longrightarrow & \ell (V_{m} \setminus V_{m - 1})\ \text{is of order}\ N^{m + 1} - N^{m}. 
\end{eqnarray*}  

\noindent 
The entries in each of these submatrices are obtained using the definition of $H_{m}$, as given in equation \eqref{defn H_m}.
\begin{eqnarray} 
(T_{m})_{pq}\ \ & = &\ \ 
\begin{cases} 
-(m-n+1)\,(N-1) & \ \ \text{if}\ p = q \in V_{n} \setminus V_{n-1} \ (n < m), \ \text{or} \ V_{0}\ \text(n=0),\\
1 & \ \ \text{if} \ p \in V_{0} \text(n=0) \ \text{or} \ p \in V_{n} \setminus V_{n-1} \ \text{with}\\
&  \qquad  \ q \in \mathcal{U}_{p,\, i} \ \text{for some} \ n \le i < m,\\
0 & \ \ \text{otherwise}. \\ 
\end{cases}  \nonumber\\ 
\label{defn_of_X_m}
(X_{m})_{pq}\ \ & = & \ \ 
\begin{cases}
-\, (N - 1) & \qquad \qquad \quad \ \ \ \text{if}\ p = q, \\
1 & \qquad \qquad \quad \ \ \ \text{if}\ q \in \mathcal{U}_{p,\, m}, \\ 
0 & \qquad \qquad \quad \ \ \ \text{otherwise}. \\
\end{cases} \\ 
(J_{m})_{pq}\ \ & = &\ \ 
\begin{cases}
1 & \qquad \qquad \quad \qquad \qquad \quad \text{if}\ q \in \mathcal{U}_{p,\, m}, \\ 
0 &  \qquad \qquad \quad \qquad \qquad \quad \text{otherwise}. \\
\end{cases} \nonumber
\end{eqnarray} 
It is now easy to verify that these submatrices satisfy the relation \[T_{m} = H_{m - 1} + J_{m}^{T} X_{m}^{-1} J_{m}. \] 
\begin{remark}
For any $m \ge 0$, $(H_{m})_{pq} = 1$ if and only if $p \in V_{i}$ and $q \in \mathcal{U}_{p,\,i}$, for some $0 \le i \le m$.
\end{remark}

\medskip

\noindent
Every difference operator $H_{m}$ satisfies the properties enlisted in the following lemma. 

\noindent 
\begin{lemma} 
\label{prop of H_0} 
\begin{enumerate}
\item $H_{m}$ is a symmetric matrix with the row sum being zero for every row. 
\item The non-diagonal entries in $H_{m}$ are non negative; in particular either $1$ or $0$. \label{prop 2}
\item $H_{m}$ is non-positive definite with rank $N^{m + 1} - 1$. 
\item The function $u \in \ell(V_{m})$ is constant, if and only if $H_{m} u = 0$. \label{prop 4} 
\end{enumerate}
\end{lemma}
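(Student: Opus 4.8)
The plan is to treat the four assertions as a single package, deriving the spectral statements (3) and (4) from the elementary structural ones (1) and (2) by recognising $-H_{m}$ as a weighted graph Laplacian. First I would dispose of (1) and (2) directly from the defining formula \eqref{defn H_m}. For $p \in V_{n} \setminus V_{n-1}$ the diagonal entry is $-(m-n+1)(N-1)$, while the off-diagonal entries in row $p$ are the indicators of the sets $\mathcal{U}_{p,\,i}$ for $n \le i \le m$. Since each deleted neighbourhood has exactly $N-1$ elements by Remark \ref{no.of direct nbrs}, and the neighbourhoods at distinct levels $i$ are disjoint (they sit at distinct distances $2^{-(i+1)}$ from $p$), no entry is counted twice; hence every off-diagonal entry is $0$ or $1$, which is (2), and the $(m-n+1)(N-1)$ ones in row $p$ cancel the diagonal exactly, giving zero row sums. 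Symmetry then follows from the characterisation that $(H_{m})_{pq} = 1$ precisely when $p \in V_{i}$ and $q \in \mathcal{U}_{p,\,i}$ for some $0 \le i \le m$: because $\sim_{i}$ is an equivalence relation and $\mathcal{U}_{p,\,i} \subset V_{i}$, the condition $q \in \mathcal{U}_{p,\,i}$ with $p \in V_{i}$ is equivalent to $p \in \mathcal{U}_{q,\,i}$ with $q \in V_{i}$, using the \emph{same} index $i$. This completes (1).

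For (3) and (4), observe that (1) and (2) say exactly that $L := -H_{m}$ is symmetric with non-positive off-diagonal entries $L_{pq} = -(H_{m})_{pq} \le 0$ and diagonal $L_{pp} = \sum_{q \ne p}(H_{m})_{pq}$; that is, $L$ is the combinatorial Laplacian of the graph $G$ on vertex set $V_{m}$ whose edges join each point to its immediate neighbours at every level $0 \le i \le m$, with unit weights $w_{pq} = (H_{m})_{pq}$. The standard identity $\langle -H_{m} u,\, u \rangle = \tfrac{1}{2}\sum_{p,\,q \in V_{m}} w_{pq}\,(u(p)-u(q))^{2} \ge 0$, which I would verify by expanding the sum and invoking the zero row sums, shows at once that $H_{m}$ is non-positive definite, and moreover that $H_{m} u = 0$ if and only if this quadratic form vanishes, if and only if $u(p) = u(q)$ whenever $p$ and $q$ are joined by an edge of $G$, i.e. $u$ is constant on each connected component of $G$.

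It then remains to show that $G$ is connected, for in that case the kernel of $H_{m}$ is exactly the one-dimensional space of constant functions, which simultaneously yields (4) and pins the rank at $N^{m+1} - 1 = |V_{m}| - 1$, completing (3). Connectivity is supplied verbatim by Remark \ref{chain connecting pts}: any two points of $V_{0}$ are $0$-related and hence adjacent, and any $p \in V_{m} \setminus V_{m-1}$ is linked to some $(\dot{l}) \in V_{0}$ through the chain $r^{0}, r^{n_{1}}, \cdots, r^{n_{d}} = p$, each consecutive pair being an edge of $G$ since $r^{n_{i-1}} \in \mathcal{U}_{r^{n_{i}},\, n_{i}}$. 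Thus every vertex is path-connected to $V_{0}$, and $V_{0}$ is itself connected, so $G$ is connected.

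The sign bookkeeping and the cancellation in the row sums are mechanical; the two places demanding genuine care are the symmetry argument, where one must check that a single level index $i$ serves both the row $p$ and the column $q$, and the exact rank count in (3), which hinges entirely on connectivity of $G$ rather than on any direct matrix manipulation. I expect the latter to be the main conceptual step, since translating the combinatorial content of Remark \ref{chain connecting pts} into a statement about the kernel of the quadratic form is precisely what upgrades ``non-positive definite'' to ``rank exactly $N^{m+1}-1$''; everything else reduces to well-known properties of graph Laplacians.
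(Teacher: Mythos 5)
Your proof is correct, but it takes a different route from the paper's for parts (3) and (4). The paper disposes of the whole lemma in two sentences: (1) and (2) are said to follow from the construction, and (3) and (4) are obtained ``by reducing the matrix to its row echelon form,'' i.e.\ by direct matrix manipulation. You instead prove (1) and (2) carefully from equation \eqref{defn H_m} (your observation that the neighbourhoods $\mathcal{U}_{p,\,i}$ at distinct levels $i$ are disjoint because they sit at distinct distances $2^{-(i+1)}$ from $p$, and that the single level index $i$ serves both $(H_m)_{pq}$ and $(H_m)_{qp}$, is exactly the bookkeeping the paper leaves implicit), and then derive (3) and (4) from the graph-Laplacian identity $\langle -H_m u, u\rangle = \tfrac{1}{2}\sum_{p,q}(H_m)_{pq}(u(p)-u(q))^2$ together with the connectivity supplied by Remark \eqref{chain connecting pts}. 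This is precisely the machinery the paper itself develops only later, in Section \eqref{Dirichlet forms}, where the same quadratic-form identity is proved and the same chain argument is used to show $\mathcal{E}_{H_m}(u)=0$ iff $u$ is constant; there is no circularity, since that identity needs only your already-established property (1). What your approach buys is a conceptual and checkable proof of the rank statement (the kernel is exactly the constants because the graph is connected), whereas a row-echelon reduction of an $N^{m+1}\times N^{m+1}$ matrix for general $N$ and $m$ is not actually carried out anywhere in the paper; the cost is that you essentially front-load Section \eqref{Dirichlet forms}. Both routes are valid, and yours arguably makes the later Dirichlet-form theorem a corollary of work already done.
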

\begin{proof} The first two properties directly follow from the construction of the difference operator $H_{m}$. The remaining two can be easily proved by reducing the matrix to its row echelon form. 
\end{proof}

\section{Dirichlet forms on $V_{m}$} 
\label{Dirichlet forms}
\noindent 
A non-positive definite symmetric linear operator on any finite set $V$ satisfying the properties \eqref{prop 2} and \eqref{prop 4} mentioned in lemma \eqref{prop of H_0} gives rise to a Dirichlet form, a fundamental notion in the analysis on finite sets. Concerned readers may refer to \cite{kigaminetwork} for more details. A \emph{Dirichlet form} on $V$ is a non-negative definite symmetric bilinear form satisfying, 
\begin{enumerate}
\item  $\mathcal{E}\,(u, u) = 0$ if and only if $u$ is constant on $V$ and 
\item for any $u \in \ell(V),\ \mathcal{E}\,(u, u) \ge \mathcal{E}\, (\bar{u}, \bar{u})$ where $\bar{u}$ is defined by
\begin{equation}
\label{bar u}
\bar{u} (p)\ \ :=\ \ \begin{cases}
1 & \text{if  } \  u(p) \geq 1, \\
u(p) & \text{if  } \   0 < u(p) < 1, \\
0  &  \text{if  } \  u(p) \leq 0.
\end{cases}
\end{equation}
\end{enumerate} 
\noindent 
Now, returning to our setting of the shift space, the symmetric difference operator $H_{m}$ defined on $\ell(V_{m})$ in section \eqref{diff operator} naturally induces a symmetric bilinear form on $\ell(V_{m})$. We denote it by $\mathcal{E}_{H_{m}} $ and is given by,
\begin{equation}
\label{Dirichlet form on V_m}
\mathcal{E}_{H_{m}} (u,v)\ := \ -\left\langle u, H_{m} v  \right\rangle \ = \ -\sum\limits_{p \,\in \, V_{m}} u(p) \,H_{m} v(p).
\end{equation}
If $v = u$, for simplicity we denote $\mathcal{E}_{H_{m}} (u, u)$ by $\mathcal{E}_{H_{m}}(u)$. We verify that $ \mathcal{E}_{H_{m}} $ defined in such a way is a Dirichlet form on $ \ell(V_{m})$, once we observe the following:

\begin{proposition}
For $u, v \in \ell(V_{m})$, 
\begin{equation*} 
\mathcal{E}_{H_{m}}(u, v)\  = \ \frac{1}{2} \sum\limits_{p,q \,\in \, V_{m}} (H_{m})_{p q}  \left( u(p) -u(q) \right) \, \left( v(p) -v(q) \right).
\end{equation*}
\end{proposition}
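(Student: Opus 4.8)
The plan is to verify both sides reduce to the common bilinear expression $-\sum_{p,q \in V_m} (H_m)_{pq}\, u(p)\, v(q)$, relying only on the two structural facts recorded in Lemma \eqref{prop of H_0}: that $H_m$ is symmetric and that every row of $H_m$ sums to zero. First I would rewrite the left-hand side by unfolding the matrix action. Since $H_m v(p) = \sum_{q \in V_m} (H_m)_{pq}\, v(q)$, the definition in \eqref{Dirichlet form on V_m} immediately yields
\[ \mathcal{E}_{H_m}(u,v)\ =\ -\sum_{p \in V_m} u(p) \sum_{q \in V_m} (H_m)_{pq}\, v(q)\ =\ -\sum_{p,q \in V_m} (H_m)_{pq}\, u(p)\, v(q). \]

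Next I would expand the claimed right-hand side. Using the algebraic identity $(u(p)-u(q))(v(p)-v(q)) = u(p)v(p) - u(p)v(q) - u(q)v(p) + u(q)v(q)$, the double sum breaks into four pieces, and I would dispose of them in two steps. The two ``square'' pieces vanish: by the zero-row-sum property,
\[ \sum_{p,q} (H_m)_{pq}\, u(p) v(p)\ =\ \sum_{p} u(p) v(p) \Big( \sum_{q} (H_m)_{pq} \Big)\ =\ 0, \]
and the piece with $u(q)v(q)$ vanishes identically once one observes that the columns of $H_m$ also sum to zero, which follows from the row-sum property together with symmetry. The two remaining cross terms coincide: relabelling the summation indices $p \leftrightarrow q$ in $\sum_{p,q}(H_m)_{pq}\, u(q) v(p)$ and invoking $(H_m)_{pq} = (H_m)_{qp}$ turns it into $\sum_{p,q}(H_m)_{pq}\, u(p) v(q)$, identical to the other cross term.

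Combining these, the right-hand side collapses to $\tfrac{1}{2}\,(-2)\sum_{p,q}(H_m)_{pq}\, u(p) v(q) = -\sum_{p,q}(H_m)_{pq}\, u(p) v(q)$, which is exactly the expression obtained for $\mathcal{E}_{H_m}(u,v)$ above, completing the argument. I do not anticipate a genuine obstacle here; the computation is purely formal. The only point requiring a moment of care is the justification that columns sum to zero as well as rows, so I would state explicitly that this is a consequence of symmetry applied to property (1) of Lemma \eqref{prop of H_0}, rather than assuming it separately.
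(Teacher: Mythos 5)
Your proposal is correct and follows essentially the same route as the paper: expand $(u(p)-u(q))(v(p)-v(q))$ into four pieces, kill the two diagonal pieces via the zero row (and, by symmetry, column) sums, and identify the two cross terms with $-\langle u, H_m v\rangle$ by swapping the summation indices. Your explicit remark that the column-sum property follows from symmetry plus the row-sum property is a small but welcome clarification the paper leaves implicit.
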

\begin{proof}
Consider,
\begin{align*}
 \sum\limits_{p,q \, \in \,V_m}  (H_{m})_{pq}\, &\left( u(p)\,-\,u(q)  \right) \, \left( v(p)\,-\,v(q)  \right)\\
 &= \  \sum\limits_{p\,\in \,V_m} \Big( u(p) \, v(p)  \sum\limits_{q\,\in\, V_{m}}  (H_{m})_{pq} \Big) \ + \  \sum\limits_{q\,\in\, V_m} \Big( u(q)\, v(q)  \sum\limits_{p\,\in\, V_{m}}  (H_{m})_{pq} \Big) \\
 &\qquad - \sum\limits_{p \,\in\, V_m} \Big( u(p) \sum\limits_{q \,\in \,V_m}(H_{m})_{pq} \, v(q) \Big) \ - \ \sum\limits_{q \,\in\, V_m} \Big( u(q) \sum\limits_{p \,\in \,V_{m}}(H_{m})_{pq} \, v(p) \Big)
\end{align*}
Since the row sum and column sum of $H_{m}$ are zero, the first two terms in the expression on the right side above vanish. Also by the definition of $H_{m}$, we have $ H_{m} v (p)\, =  \sum\limits_{q \,\in \,V_{m}} (H_{m})_{p q} \, v(q)$. So by reversing the roles of $p$ and $q$ in the last term of above expression on the right side, we obtain,
\[ \frac{1}{2}\sum\limits_{p,q \, \in \,V_m}  (H_{m})_{pq}\, \left( u(p)\,-\,u(q)  \right) \, \left( v(p)\,-\,v(q)  \right) \ = \ - \sum\limits_{p \,\in\, V_m}  u(p)\, H_{m}v(p) \ = \ \mathcal{E}_{H_{m}} (u,v) \]
\end{proof}

\noindent
The following corollary follows when $v = u$, in the above proposition.
\begin{corollary}\label{formula E_Hm}
For any $u \in \ell(V_{m})$,
\begin{equation} 
\label{altrn rep of EHmu}
\mathcal{E}_{H_{m}}(u)\ \ =\ \ \frac{1}{2} \sum\limits_{p,q \,\in \, V_{m}} (H_{m})_{p q}  \left( u(p) -u(q) \right)^{2}.
\end{equation}
\end{corollary}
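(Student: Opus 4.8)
The plan is to obtain this corollary as an immediate specialization of the preceding Proposition, rather than re-deriving anything from scratch. The Proposition furnishes, for \emph{all} $u, v \in \ell(V_{m})$, the identity
\[ \mathcal{E}_{H_{m}}(u, v)\ =\ \frac{1}{2} \sum\limits_{p,q \,\in \, V_{m}} (H_{m})_{p q}  \left( u(p) -u(q) \right) \, \left( v(p) -v(q) \right), \]
so the first and only step I would take is to set $v = u$ in this formula. Under that substitution each summand $\left( u(p) - u(q) \right)\left( v(p) - v(q) \right)$ collapses to $\left( u(p) - u(q) \right)^{2}$, producing exactly the right-hand side claimed in equation \eqref{altrn rep of EHmu}.

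For the left-hand side, I would invoke the notational convention introduced just after the definition of the bilinear form $\mathcal{E}_{H_{m}}$ in equation \eqref{Dirichlet form on V_m}, namely that $\mathcal{E}_{H_{m}}(u, u)$ is abbreviated to $\mathcal{E}_{H_{m}}(u)$. Thus $\mathcal{E}_{H_{m}}(u, v)$ with $v = u$ is precisely $\mathcal{E}_{H_{m}}(u)$, matching the left-hand side of the corollary. No further manipulation is required.

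There is essentially no obstacle to overcome here: all the substantive work (using the symmetry of $H_{m}$ and the vanishing of its row and column sums to discard the first two double sums) was already carried out in the proof of the Proposition. The corollary is a one-line consequence, and the only thing worth flagging is that the Proposition was stated for arbitrary $u, v$, so the choice $v = u$ is legitimate without any additional hypothesis. I would therefore present the argument in a single sentence, simply recording the specialization.
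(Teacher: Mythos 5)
Your proposal is correct and is exactly how the paper obtains the corollary: the text states that the result "follows when $v = u$ in the above proposition," which is precisely the one-line specialization you describe. Nothing further is needed.
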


\noindent 
\begin{theorem}
The bilinear form $\mathcal{E}_{H_{m}} $ defined in equation \eqref{Dirichlet form on V_m} is a Dirichlet form on $V_{m}$.
\end{theorem}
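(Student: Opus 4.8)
The plan is to verify the two defining axioms of a Dirichlet form for the bilinear form $\mathcal{E}_{H_m}$. First I would establish that $\mathcal{E}_{H_m}$ is symmetric and non-negative definite. Symmetry is immediate from the symmetry of $H_m$ (Lemma \ref{prop of H_0}(1)) together with the definition $\mathcal{E}_{H_m}(u,v) = -\langle u, H_m v\rangle$. For non-negativity, I would invoke the representation obtained in Corollary \ref{formula E_Hm}, namely
\[ \mathcal{E}_{H_m}(u) = \frac{1}{2} \sum_{p,q \in V_m} (H_m)_{pq}\,(u(p)-u(q))^2, \]
and observe that by Lemma \ref{prop of H_0}(2) every off-diagonal entry $(H_m)_{pq}$ is non-negative (equal to $1$ or $0$), while the diagonal terms contribute nothing since $(u(p)-u(p))^2 = 0$. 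Hence every summand is non-negative and $\mathcal{E}_{H_m}(u) \ge 0$.

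Next I would check axiom (1), that $\mathcal{E}_{H_m}(u,u)=0$ if and only if $u$ is constant. The ``if'' direction is clear from the squared-difference formula, since a constant $u$ makes each factor $u(p)-u(q)$ vanish. For the ``only if'' direction, suppose $\mathcal{E}_{H_m}(u)=0$. Since each term in the sum is non-negative, we must have $(H_m)_{pq}(u(p)-u(q))^2 = 0$ for every pair $p,q$; in particular $u(p)=u(q)$ whenever $(H_m)_{pq}=1$, that is, whenever $q$ is an immediate neighbour of $p$ in some $\mathcal{U}_{p,i}$. The key point is then that the neighbour relation connects all of $V_m$: using the chain of $k$-related points constructed in Remark \ref{chain connecting pts}, any point of $V_m$ can be joined to any other through a sequence of immediate-neighbour links, so equality propagates along the chain and forces $u$ to be constant on all of $V_m$. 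Alternatively, one may simply cite Lemma \ref{prop of H_0}(4), which already states that $H_m u = 0$ precisely for constant $u$, combined with non-positive definiteness; I would prefer the explicit connectivity argument as it is self-contained.

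Finally I would verify axiom (2), the Markov (contraction) property $\mathcal{E}_{H_m}(u) \ge \mathcal{E}_{H_m}(\bar{u})$ for the truncation $\bar{u}$ defined in \eqref{bar u}. Working again from the squared-difference representation, it suffices to show that for each pair $p,q$ the inequality $(u(p)-u(q))^2 \ge (\bar{u}(p)-\bar{u}(q))^2$ holds, since $(H_m)_{pq}\ge 0$ lets me sum these pointwise inequalities. This reduces to the elementary fact that the truncation map $t \mapsto \max(0,\min(1,t))$ is a contraction on $\mathbb{R}$, so $|\bar{u}(p)-\bar{u}(q)| \le |u(p)-u(q)|$. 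I expect this last step to be the only place requiring a genuine (though short) case analysis, comparing the positions of $u(p),u(q)$ relative to the thresholds $0$ and $1$; it is the main, if modest, obstacle, whereas the first two axioms follow almost directly from Corollary \ref{formula E_Hm} and Lemma \ref{prop of H_0}.
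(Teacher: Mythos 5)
Your proposal is correct and follows essentially the same route as the paper: non-negativity and the constancy criterion via the squared-difference representation of Corollary \eqref{formula E_Hm} together with the chain of related points from Remark \eqref{chain connecting pts}, and the Markov property by a termwise comparison of $\mathcal{E}_{H_{m}}(\bar{u})$ with $\mathcal{E}_{H_{m}}(u)$. Your reduction of that last step to the $1$-Lipschitz property of the truncation $t \mapsto \max(0,\min(1,t))$ is a slightly cleaner packaging of the paper's explicit case-by-case expansion, but it is the same argument.
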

\begin{proof}
Consider the alternate expression for $\mathcal{E}_{H_{m}}(u)$ obtained in the corollary \eqref{formula E_Hm}. Note that on the right hand side of equation (\ref{altrn rep of EHmu}), the terms corresponding to the points $p, q \in V_{m}$ such that $ p = q$ or $(H_{m})_{pq} = 0 $ contribute nothing to the sum. Among the remaining terms that contribute to the sum, the points $p, q$ are such that $ p \ne q$ with $(H_{m})_{pq} = 1 $.  Thus, all the terms in the sum are non-negative and we have,
\[ \mathcal{E}_{H_{m}}(u) \geq 0, \text{ for all } u \in \ell(V_{m}).  \] 
$ \mathcal{E}_{H_{m}}(u) = 0$ if and only if $(H_{m})_{pq}\, \left( u(p)-u(q)  \right)^2 = 0 $ for all $p,q \in V_{m} $, since every individual term in the sum for $\mathcal{E}_{H_{m}}(u) $ is non-negative. From the definition of $H_{m}$, it follows that whenever $(H_{m})_{pq} = 1 $, that there exists some $k \in \mathbb{N}$ such that $0 \le k \le m$ and $ q \in \mathcal{U}_{p, k}$. Then we obtain $(H_{m})_{pq}\, \left( u(p)-u(q)  \right)^2 = 0 $ if and only if $ u(q) = u(p)$ whenever $ q \in \mathcal{U}_{p, k}$ for some $0 \leq k \leq m$. In other words, $u$ assumes a constant value for any two $k$-related points in $V_{m}$. In particular, for any $p,\,q \in V_{0}, \ q \in \mathcal{U}_{p, 0} $ holds, and thus the function $u$ is constant on $V_{0}$. Recall that, any point in $V_{m}$ can be connected to a point in $V_{0}$ by a chain of related points at intermediary steps, as described in remark \eqref{chain connecting pts}. Therefore we obtain that $ \mathcal{E}_{H_{m}}(u) = 0$ if and only if $u$ is a constant function on $V_{m}$.
\medskip

\noindent
For a function $u \in \ell(V_{m})$, construct a function $\bar{u} \in \ell(V_{m})$ as defined in \eqref{bar u}. Consider,
\allowdisplaybreaks
\begin{eqnarray*}
\mathcal{E}_{H_{m}}\, (\bar{u}) & = & \frac{1}{2} \sum\limits_{p, q \,\in \,V_{m}} (H_{m})_{p q} \, \left( \bar{u}(p) - \bar{u}(q) \right)^{2} \\
& = & \frac{1}{2} \left[ \sum\limits_{\substack{p, q \,\in\, V_{m} \\ 0\, <\, u(p), u(q)\, < \,1}} (H_{m})_{p q} \, \left( u(p) - u(q) \right)^{2} \ +  \sum\limits_{\substack{p, q \,\in \, V_{m} \\ 1\, \le \, u(p) \\ 0 \,<\, u(q)\, < 1}} (H_{m})_{p q} \, \left( 1 - u(q) \right)^{2} \right. \\
& & \hspace{+2cm} \left. +\  \sum\limits_{\substack{p, q \,\in \, V_{m} \\ 1\, \le \, u(q) \\ 0\, <\, u(p)\, < \,1}} (H_{m})_{p q} \, \left( u(p) - 1 \right)^{2} \  +   \sum\limits_{\substack{p, q \,\in \, V_{m} \\ u(p)\, \le \, 0 \\ 0 \,<\, u(q)\, < 1}} (H_{m})_{p q} \, \left( u(q) \right)^{2} \right. \\
& & \hspace{+7cm} \left. +\ \sum\limits_{\substack{p, q \,\in \, V_{m} \\ u(q)\, \le \, 0 \\ 0 \,<\, u(p)\, < 1}} (H_{m})_{p q} \, \left( u(p) \right)^{2} \right] \\
& \le & \frac{1}{2} \sum\limits_{p, q \, \in \, V_{m}} (H_{m})_{p q} \, \left( u(p) - u(q) \right)^{2} \\
& = & \mathcal{E}_{H_{m}}\, (u) 
\end{eqnarray*}
\end{proof}
\medskip

\noindent
For any real valued function (not necessarily continuous) $u$ on $\Sigma_{N}^{+} $, denote its restriction to $V_{m}$ by $u|_{V_{m}}$. Clearly $u|_{V_{m}} \in \ell(V_{m}) $. The sequence $\left\lbrace \mathcal{E}_{H_{m}} (u|_{V_{m}}) \right\rbrace_{m \ge 0}$ is non-decreasing, as, for any $m \ge 0$, by the definition of $H_{m}$, we have,
\begin{eqnarray}
\mathcal{E}_{H_{m+1}}(u|_{V_{m+1}}) & = & \frac{1}{2} \ \sum\limits_{i\, =\, 0}^{m + 1} \ \sum\limits_{p \, \in \,V_{i}} \ \sum\limits_{q \, \in \, \mathcal{U}_{p,\,i}} \Big( u(p) - u(q) \Big)^{2} \notag \\
& = &\mathcal{E}_{H_{m}}(u|_{V_{m}})\, + \frac{1}{2}  \sum\limits_{p \, \in \,V_{m+1}} \  \sum\limits_{q \, \in \, \mathcal{U}_{p,\,m+1}} \Big( u(p) - u(q) \Big)^{2} \label{EH_m+1} \\
& \ge & \mathcal{E}_{H_{m}}(u|_{V_{m}}). \notag
\end{eqnarray}

\noindent
The following theorem states that the sequence $\{ (V_{m}, H_{m}) \}_{m \ge 0}$ that we have constructed, is compatible in the sense of equation \eqref{compatibility_energy}.

\begin{theorem} \label{extension}
Any $u_{m} \in \ell(V_{m}) $ can be uniquely extended to a function $u_{m+1} \in \ell(V_{m+1})$ preserving the respective Dirichlet forms in the sense that,
\[\mathcal{E}_{H_{m+1}} (u_{m+1})\  = \ \mathcal{E}_{H_{m}} (u_{m}) \ = \ \min \left\lbrace \mathcal{E}_{H_{m+1}} (v )\ |\  v \in \ell(V_{m+1}),\, v|_{V_{m}} = u_{m} \right\rbrace. \]
\end{theorem}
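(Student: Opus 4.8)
The plan is to recast the constrained minimisation as an unconstrained quadratic optimisation over the values on the newly added points $V_{m+1}\setminus V_m$, and then exploit the block relation already recorded in section \eqref{diff operator}. Any $v\in\ell(V_{m+1})$ with $v|_{V_m}=u_m$ is completely determined by its restriction $w:=v|_{V_{m+1}\setminus V_m}\in\ell(V_{m+1}\setminus V_m)$, so I would write $v$ in block form as the pair $(u_m,w)$ relative to the ordering in which $V_m$ precedes $V_{m+1}\setminus V_m$. Using the decomposition
\[ H_{m+1}\ =\ \begin{pmatrix} T_{m+1} & J_{m+1}^{T}\\ J_{m+1} & X_{m+1}\end{pmatrix}, \]
a direct expansion of $\mathcal{E}_{H_{m+1}}(v)=-\langle v,H_{m+1}v\rangle$ together with the symmetry of $H_{m+1}$ gives $\mathcal{E}_{H_{m+1}}(v)=-\big(\langle u_m,T_{m+1}u_m\rangle+2\langle w,J_{m+1}u_m\rangle+\langle w,X_{m+1}w\rangle\big)$, a quadratic in $w$ with $u_m$ held fixed.

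Next I would minimise this quadratic over $w$. The Hessian of $\mathcal{E}_{H_{m+1}}(v)$ in the variable $w$ is $-2X_{m+1}$, so once $X_{m+1}$ is shown to be negative definite the functional is strictly convex in $w$ and possesses a unique critical point, obtained by setting the gradient to zero: $X_{m+1}w=-J_{m+1}u_m$, that is, $w^{*}=-X_{m+1}^{-1}J_{m+1}u_m$. This both produces the desired extension $u_{m+1}:=(u_m,w^{*})$ and establishes its uniqueness. Substituting $w^{*}$ back and simplifying using the symmetry of $X_{m+1}^{-1}$ collapses the three terms to $-\langle u_m,(T_{m+1}-J_{m+1}^{T}X_{m+1}^{-1}J_{m+1})u_m\rangle$; the Schur-type identity $T_{m+1}=H_m+J_{m+1}^{T}X_{m+1}^{-1}J_{m+1}$ from section \eqref{diff operator} then turns this into $-\langle u_m,H_m u_m\rangle=\mathcal{E}_{H_m}(u_m)$, which is exactly the claimed chain of equalities (the minimum value coinciding with the value at $u_{m+1}$ by convexity).

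The main obstacle, and the only step that is not purely formal, is the invertibility, indeed the negative definiteness, of $X_{m+1}$, on which both the existence of the minimiser and the very meaning of the Schur identity rest. I would prove this by strict diagonal dominance: by remark \eqref{m-related V_m-V_{m-1}}, for $p\in V_{m+1}\setminus V_m$ exactly one of its $N-1$ immediate neighbours lies in $V_m$, so within $V_{m+1}\setminus V_m$ the point $p$ retains only $N-2$ neighbours. Reading off the entries of $X_{m+1}$ from \eqref{defn_of_X_m}, each row has diagonal entry $-(N-1)$ and precisely $N-2$ off-diagonal entries equal to $1$; hence $|(X_{m+1})_{pp}|=N-1>N-2=\sum_{q\ne p}|(X_{m+1})_{pq}|$. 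Strict diagonal dominance with a negative diagonal forces $X_{m+1}$ to be negative definite, hence invertible, which closes the argument.
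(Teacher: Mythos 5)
Your proposal is correct, but it follows a genuinely different route from the paper. The paper never touches the block matrices here: it uses the telescoping identity in equation \eqref{EH_m+1}, namely $\mathcal{E}_{H_{m+1}}(v)=\mathcal{E}_{H_{m}}(v|_{V_{m}})+\tfrac{1}{2}\sum_{p\in V_{m+1}}\sum_{q\in\mathcal{U}_{p,\,m+1}}\left(v(p)-v(q)\right)^{2}$, so the increment is a sum of squares that vanishes exactly when $v$ is constant on each $(m+1)$-equivalence class; since each such class of new points contains exactly one point of $V_{m}$ (remark \eqref{m-related V_m-V_{m-1}}), the values on $V_{m+1}\setminus V_{m}$ are forced to be $u_{m}(q^{N-1})$, which delivers existence, uniqueness and minimality in one stroke with no matrix inversion. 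Your Schur-complement argument is the standard ``restriction of a network'' computation: it is more general (it would work for any compatible pair with $X_{m+1}$ negative definite, not just this combinatorial structure), and your diagonal-dominance proof that $X_{m+1}$ is negative definite is sound (each row has diagonal $-(N-1)$ and exactly $N-2$ off-diagonal ones, since one of the $N-1$ immediate neighbours falls back into $V_{m}$). The trade-off is that you outsource the decisive computation to the identity $T_{m+1}=H_{m}+J_{m+1}^{T}X_{m+1}^{-1}J_{m+1}$, which the paper asserts at the end of section \eqref{diff operator} but does not prove; a fully self-contained version of your argument would have to verify it, at which point the effort roughly matches the paper's direct approach. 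You also obtain the minimizer only implicitly as $w^{*}=-X_{m+1}^{-1}J_{m+1}u_{m}$; using $X_{m+1}^{-1}=-G_{m+1}$ one checks this coincides with the paper's explicit formula $u_{m+1}(p)=u_{m}(q^{N-1})$, and it would strengthen your write-up to say so.
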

\begin{proof}
Assuming such an extension $u_{m+1}$ of $u_{m}$ exists, let us explicitly construct the same. Since $u_{m+1}$ should satisfy $\mathcal{E}_{H_{m+1}} (u_{m+1}) = \mathcal{E}_{H_{m}} (u_{m}) $, from equation \eqref{EH_m+1}, we get
\[ \sum\limits_{p \, \in \,V_{m+1}} \  \sum\limits_{q \, \in \, \mathcal{U}_{p,\,m+1}} \Big( u(p) - u(q) \Big)^{2} \ = \ 0, \]
which holds if and only if for any $p \in V_{m+1}$,
\[ u_{m+1}(q)\, = \,u_{m+1}(p) \ \text{ for all } \ q \in \mathcal{U}_{p,\,{m+1}}.  \] 
Thus, this extension is unique and it takes constant values on the equivalence classes $[p_{1}\,p_{2}\,\cdots p_{m + 1} ]|_{V_{m+1}}$. Recall from remark \eqref{m-related V_m-V_{m-1}}, that the deleted neighbourhood of any $p \in V_{m+1} \setminus V_{m}$ is given by, 
\[ \mathcal{U}_{p,\,{m+1}}\  = \ \left\lbrace q^{1},\, q^{2},\,\cdots, q^{N-1} \right\rbrace \ \subset \ [p_{1}\,p_{2}\,\cdots p_{m + 1} ]|_{V_{m + 1}},\]
with only one immediate neighbour $q^{N-1} \in V_{m}$. Therefore the extension $u_{m+1}$ on $V_{m+1} \setminus V_{m}$ is uniquely determined by $u_{m}$ as 
\[ u_{m+1}(p)\ = \ u_{m+1}(q^{1}) \ = \ \cdots \ = \ u_{m+1}(q^{N-2})\ = \ u_{m}(q^{N-1}). \] 
\end{proof}
\medskip

\section{Energy}
\label{energy}
The sequence of the Dirichlet forms corresponding to the compatible sequence of difference operators, as constructed in the last section, gives rise to a non-negative definite symmetric bilinear form in the limiting sense. We call this form as \textit{energy}. 
\begin{definition} 
The \textit{energy} of $u$ is defined as,
\[ \mathcal{E}(u) \ = \  \lim\limits_{m \to \infty} \mathcal{E}_{H_{m}}\,(u_{m}),  \] 
where $+\infty$ is a possible limiting value. Further, we define the \emph{domain of energy} as,
\[ dom \, \mathcal{E} \ := \ \left\lbrace  u \in \mathcal{C}(\Sigma_{N}^{+}) \ : \ \mathcal{E} (u) < \infty  \right\rbrace. \] 
\end{definition} 

\noindent 
Since $\mathcal{E}(u)$ is a series in which each summand is non-negative, $\mathcal{E}(u) = 0 $ if and only if $u$ is constant.\\ 

\noindent
We now see that the energy is a Markovian form. Consider a function $\phi : \mathbb{R} \longrightarrow \mathbb{R}$, 
\begin{equation*}
\phi(t) \ = \ \begin{cases}
0 \ \ \ \ \text{if} \ t \le 0 \\
t \ \ \ \ \text{if} \ t \in [0,1] \\
1 \ \ \ \ \text{if} \ t \ge 1. 
\end{cases}
\end{equation*}
Then, for any $u \in dom \, \mathcal{E}$, it follows directly from the definition of the energy $\mathcal{E}$, that $\phi \circ u \in dom \, \mathcal{E}$, with $\mathcal{E}(\phi\circ u) \le \mathcal{E}(u)$, proving the Markovian property. \\

\noindent
Similar to the way the energy is defined as the limit of finite Dirichlet forms, we define Laplacian as a renormalised limit of the finite difference operators, in the next section. At this juncture, we carefully restrict ourselves from calling this energy to be a Dirichlet form associated with the Laplacian (that we will soon define). Although the finite difference operators and finite Dirichlet forms are associated in a certain way, this relation may not get carried over in exactly the same way in the limit. But we can still expect some relation to exist between the energy and the Laplacian defined in the next section. However, this is not the aim of this paper and we will look into this matter in detail in a subsequent paper, \cite{sssnt}.
\medskip

\noindent
Consider the function $u_{m+1} \in \ell(V_{m+1})$ constructed in the proof of theorem \eqref{extension}. Extend the same to a function $u \in \mathcal{C}(\Sigma_{N}^{+})$ by fixing it to be constant on the cylinder sets of length $m+1$ in $\Sigma_{N}^{+}$. These constants are determined by the values of $u$ at the points in $V_{m}$. That is, 
\[ u(x)\ \ =\ \ u_{m} (p_{1}\, p_{2}\, \cdots\, p_{m}\, \dot{p}_{m + 1})\ \ \ \ \text{whenever}\ \ \ x \in \left[ p_{1}\, p_{2}\, \cdots\, p_{m}\, p_{m + 1} \right]. \] 
Then for all $n \ge m+1$, $\mathcal{E}_{H_{n}} (u|_{V_{n}}) = \mathcal{E}_{H_{m}} (u_{m})$. Due to the compatibility of the difference operators as proved in theorem \eqref{extension}, this particular extension has the least energy among all the extensions of $u_{m}$,
\[ \mathcal{E} (u) = \min \left\lbrace \, \mathcal{E}(v) \ : \ v \in dom \, \mathcal{E}, \, v|_{V_{m}} = u_{m} \, \right\rbrace .  \]
Therefore, we call such an extension of a function as the energy minimizer extension. In general we can define such functions as follows.
\begin{definition} 
A real valued function $h$ on $\Sigma_{N}^{+} $ is called as an \emph{energy minimizer}, if for some $n \ge 0$, 
\begin{equation*}
h(x) \ = \ h(p) \ \text{ whenever } \ x \in \left[p_{1}\,p_{2}\,\cdots \,p_{n}\,p_{n+1}  \right] ,
\end{equation*}
where $p = \left( p_{1}\,p_{2}\,\cdots \,p_{n}\,\dot{p}_{n+1} \right) \in V_{n}$.
\end{definition}

\noindent
The energy minimizer extension of a function in $\ell(V_{m})$ takes constant values on cylinder sets of length $m+1$. For instance, if $p = (p_{1}\, p_{2}\,\cdots\,p_{m}\,\dot{p}_{m+1}) \in V_{m}$ and $\chi_{p} \in \ell(V_{m})$ is its characteristic function, then its energy minimizer extension is given by $\chi_{p}^{m} : \Sigma_{N}^{+} \longrightarrow \mathbb{R}$ as,
\begin{equation} 
\label{uhe}
\chi_{p}^{m} \ = \  
\begin{cases}
1 &  \text{ on } \  [p_1\, p_2\, \cdots\, p_{m+1}] \\
0 &  \text{ elsewhere. }
\end{cases} 
\end{equation}

\noindent 
These are the simple functions in $\mathcal{C}(\Sigma_{N}^{+})$ which will play a crucial role in the study of Laplacian that we will define in the following section. Having defined the energy, let us look at the effective resistance on the set $V_{*}$. Our claim is that $V_{*}$ is unbounded with respect to the resistance metric. Recall the effective resistance between the points $a, b \in V_{*}$ is defined as 
\[R(a,b) =  \big[\, \min \left\lbrace \mathcal{E}(u)\, : \, u \in dom\, \mathcal{E},\, u(a)=1,\, u(b)=0    \right\rbrace \, \big] ^{-1} \]
Let $m \ge 1$ and choose two non related points $a = (a_{1}\, \cdots \, a_{m}\, \dot{a}_{m+1})$ and $b = (b_{1}\, \cdots \, b_{m}\, \dot{b}_{m+1})$ in $V_{m}\setminus V_{m-1}$, such that $a_{i} \neq a_{i+1}$, $b_{i} \neq b_{i+1}$ and $a_{i} \neq b_{i}$ for all $1 \le i \le m$. Consider the equivalence classes in the sets $V_{i}$, $1 \le i \le m$, generated from the points $a$ and $b$, 
\[ [a_{1}\, \cdots \, a_{m}]|_{V_{m}}, \ \  [a_{1}\, \cdots \, a_{m-1}]|_{V_{m-1}}, \ \ \cdots \  [a_{1}]|_{V_{1}}, \ \ [b_{1}\, \cdots \, b_{m}]|_{V_{m}}, \ \  [b_{1}\, \cdots \, b_{m-1}]|_{V_{m-1}}, \ \ \cdots \  [b_{1}]|_{V_{1}}. \]
By virtue of remark \eqref{chain connecting pts}, the points $a$ and $b$ can be connected by a chain of $i$-related points, for $0 \le i \le m$, each belonging to the equivalence classes above, and the points $(\dot{a}_{1})$ and $(\dot{b}_{1})$ in $V_{0}$. This is the longest chain of $i$-related points in $V_{m}$, connecting two non-related points in $V_{m}\setminus V_{m-1}$, due to the particular choice of the points $a$ and $b$.
\medskip

\noindent
We denote the points in each of these equivalence classes as, $[a_{1}\, \cdots \, a_{m}]|_{V_{m}} = \left\lbrace a, a_{m}^{1}, \cdots , a_{m}^{N-1} \right\rbrace $ with $a_{m}^{N-1} \in V_{m-1} \setminus V_{m-2}$. Similarly, $[a_{1}\, \cdots \, a_{m-1}]|_{V_{m-1}} = \left\lbrace a_{m}^{N-1}, a_{m-1}^{1}, \cdots , a_{m-1}^{N-1} \right\rbrace $ with $a_{m-1}^{N-1} \in V_{m-2} \setminus V_{m-3}$, and so on. Finally, $ [a_{1}]_{V_{1}} = \left\lbrace a_{2}^{N-1}, a_{1}^{1}, \cdots , a_{1}^{N-1} \right\rbrace $ with $a_{1}^{N-1} = (\dot{a}_{1}) \in V_{0}$. The points in the equivalence classes generated by the point $b$ are denoted in the same manner with $a$ replaced by $b$ in the above notation. 
\medskip

\noindent
Let us construct a function $u \in \ell(V_{m})$ as follows. Set $u(a) = 1$ and $u(b) = 0$. Let $\delta_{1}, \, \delta_{2} > 0 $ satisfying 
\[ \delta_{i}\ \ <\ \ \frac{1}{2m (N - 1)}\ \ \text{for}\ i = 1, 2\ \ \ \ \text{and}\ \ \ \ \delta_{1}^{2}\; +\; \delta_{2}^{2}\ \ <\ \ \frac{6 (m^{2} - m - 1)}{ (m^{4} + m) (N - 1)^{2} (2N^{2} - N)}. \]
At the points in the equivalence classes above generated by $a$, set the values of $u$ in the decreasing manner with a difference of $\delta_{1}$ as 
\begin{eqnarray*} 
u(a_{m}^{1}) = 1 - \delta_{1},\ u(a_{m}^{2}) = 1 - 2 \delta_{1}, & \cdots, & u(a_{m}^{N - 1}) = 1 - (N - 1) \delta_{1}, \\ 
& \cdots, & u(a_{m - 1}^{N - 1}) = 1 - 2(N - 1) \delta_{1}, \\ 
& \cdots, & u((\dot{a}_{1})) = 1 - m(N - 1) \delta_{1}. 
\end{eqnarray*}
Similarly, fix the values of $u$ at points in the equivalence classes generated by $b$, each increasing by the quantity $\delta_{2}$.
At all the remaining points of $V_{m}$, $u$ is set to take the constant value $u(\dot{a}_{1}) - \frac{\delta_{1}+\delta_{2}}{2}$. 
\medskip

\noindent
On careful observation, we note that only the points in the equivalence classes above and all the points in $V_{0}$ contribute in determining $\mathcal{E}_{H_{m}}(u)$ given by equation \eqref{altrn rep of EHmu}. The terms involving all other points vanish as the function $u$ is set to take the same constant value. Upon substituting for these values of $u$ in the expression of $\mathcal{E}_{H_{m}}(u)$ in equation \eqref{altrn rep of EHmu}, we obtain, \[ \mathcal{E}_{H_{m}}(u) < \frac{1}{m+1}.\]
Let $h(u)$ denote the energy minimizer taking constant values on cylinder sets of length $m+1$, obtained by extending $u$. Clearly, $(h(u))(a) = 1$ and $(h(u))(b) = 0$ and $\mathcal{E}(h(u)) = \mathcal{E}_{H_{m}}(u)$. Then,
\[ \big[\, \min \left\lbrace \mathcal{E}(u)\, : \, u \in dom \, \mathcal{E}, \, u(a)=1,\, u(b)=0    \right\rbrace \, \big]  \ \le \ \mathcal{E}(h(u)) \ < \ \frac{1}{m+1}. \] 
This implies that the effective resistance between $a$ and $b$ is, $R(a, b) > m+1$, thus proving our claim. As already discussed in the introduction, due to the unboundedness, the completion of $V_{*}$ with respect to $R$ will only be a proper subset of $\Sigma_{N}^{+}$. Thus the Laplacian on $\Sigma_{N}^{+}$ can not be obtained in the topology generated by the effective resistance. Despite this fact, in the next section we prove that the Laplacian of a continuous function in the standard topology induced by the metric $d$, can be defined, as a scalar limit of the difference operators $H_{m}$. 
\medskip

\section{The Laplacian} 
\label{the laplacian}

\noindent 
We begin this section by considering the discrete approximation of the Laplacian of a twice differentiable {\it i.e.,} $\mathcal{C}^{2}$ function on $\mathbb{R}$. If $u: \mathbb{R} \longrightarrow \mathbb{R}$ is a $\mathcal{C}^{2}$ function, then we write, 
\begin{equation} 
\label{laplacian in R}
\Delta u\, (x)\ \ =\ \ \lim_{h \to 0}\, \frac{1}{h^2}\, \left[ u\, (x + h) + u\, (x - h) - 2u\, (x) \right].
\end{equation}

\noindent 
Observe that the right hand side in the definition of $H_{m}$ as in equation \eqref{defn_of_Hm_u} is analogous to the quantity inside the bracket on right hand side of the equation \eqref{laplacian in R}. Hence, it makes sense to use this difference operator $H_{m}$ normalized appropriately and define the Laplacian on $\Sigma_{N}^{+}$. 
\medskip 

\noindent 
Now we use the density arguments to extend the operator on the full shift space, $\Sigma_{N}^{+}$. The Laplacian of a function $u$ in $\mathcal{C}(\Sigma_N^+)$ can now be defined as the limit of $H_{m}(u|_{V_{m}})$ with some proper scaling as given below. 
\begin{definition}
For the equidistributed Bernoulli measure $\mu$ defined in equation \eqref{measure}, define the set
\begin{eqnarray} 
\label{laplacian}
D_{\mu} & := & \Bigg\{ u \in \mathcal{C}(\Sigma_{N}^{+}) : \exists\, f \in \mathcal{C}(\Sigma_{N}^{+})\ \text{satisfying} \nonumber \\ 
& & \hspace{+2cm} \lim_{m \to \infty} \max_{p\, \in\, V_{m} \setminus V_{m - 1}} \left| \frac{H_{m} u (p)}{\mu([p_{1} p_{2} \cdots p_{m + 1}])}\, - f(p) \right| = 0 \Bigg\}. 
\end{eqnarray} 
Then, for $u \in D_{\mu} $, we write $f = \Delta_{\mu} u$. We call the operator $\Delta_{\mu}$ as the Laplacian on $\mathcal{C} (\Sigma_{N}^{+})$ and the set $D_{\mu}$ is referred to as the domain of the Laplacian. 
\end{definition}

\noindent
A function $h$ on $\Sigma_{N}^{+}$ is called a \emph{harmonic function}, if $\Delta h = 0$. A natural question that may arise in the readers' minds now, is whether the domain of the Laplacian $D_{\mu}$ is vacuous. Let $h \in \mathcal{C}(\Sigma_{N}^{+})$ be an energy minimizer, as defined in the previous section. There exists $n \ge 0$ such that $h$ is constant on the cylinder sets of length $n+1$. Consider the functions $ h_{m} \in \ell(V_{m}) $ given by $h_{m} = h|_{V_{m}}$. Then for any $m \ge n+1$, $ h_{m}(p)- h_{m}(q) = 0$ whenever $q \in \mathcal{U}_{p,\, m}$ and we have,
\[ N^{m+1} \sum\limits_{q \, \in \, \mathcal{U}_{p^{m},\, m} } \big( h_{m}(q)-h_{m}(p^{m}) \big) = 0 \ \ \text{for any} \ \ p^{m} \in V_{m}\setminus V_{m-1}. \]
Therefore, $\Delta h = 0$, which implies that every energy minimizer belongs to $D_{\mu}$ and is in fact a harmonic function. Also, $\mathcal{E}(h) = \mathcal{E}_{H_{n}}(h_{n})$ for some $n \ge 0$.

\medskip

\noindent
The convergence required in the definition of the Laplacian above, is relatively stronger to determine a function $f$ directly for a given $u \in D_{\mu}$. In the following theorem, we derive the pointwise formulation of the Laplacian which will come in handy in most of the calculations throughout. For convenience of notations, we lose the subscript  $\mu$ for the Laplacian, since we will only consider the equidistributed Bernoulli measure $\mu$.

\noindent 
\begin{theorem}
\label{pw laplacian}
Let $u \in D_{\mu}$ and $ \Delta u = f$. For any $ x \in \Sigma_{N}^{+}$, there exists a sequence of points $ \left\lbrace p^{m} \right\rbrace_{m \ge 1} $ with $p^{m}\, \in\, V_{m} \setminus V_{m - 1} $ such that
\begin{equation} 
\label{pointwise laplacian}
f(x)\ \ =\ \ \Delta u \,(x)\ \ =\ \ \lim_{m \rightarrow \infty} N^{m + 1}\, H_{m} u (p^{m}).
\end{equation}
\end{theorem}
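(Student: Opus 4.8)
The plan is to connect the pointwise limit in equation \eqref{pointwise laplacian} to the uniform convergence in the definition of $D_\mu$ in \eqref{laplacian}. Recall that membership in $D_\mu$ means that $\max_{p \in V_m \setminus V_{m-1}} |H_m u(p)/\mu([p_1 \cdots p_{m+1}]) - f(p)|$ tends to $0$. Since $\mu([p_1 \cdots p_{m+1}]) = N^{-(m+1)}$ by \eqref{measure}, the ratio appearing there is precisely $N^{m+1} H_m u(p)$. So for any sequence $\{p^m\}$ with $p^m \in V_m \setminus V_{m-1}$, the quantity $|N^{m+1} H_m u(p^m) - f(p^m)|$ is bounded above by the maximum, and hence tends to $0$ as $m \to \infty$. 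The entire content of the theorem therefore reduces to choosing, for a \emph{given} $x \in \Sigma_N^+$, an admissible sequence $\{p^m\}$ for which $f(p^m) \to f(x)$, so that the two terms can be added together using the triangle inequality.

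First I would fix $x = (x_1\, x_2\, \cdots) \in \Sigma_N^+$ and, following the density construction in section \eqref{sec_settings}, build a sequence of points approaching $x$. The natural candidate is $p^m = (x_1\, x_2\, \cdots\, x_m\, \dot{x}_{m+1})$; by the density argument already established, these converge to $x$ in the metric $d$. The one subtlety is that the theorem demands $p^m \in V_m \setminus V_{m-1}$, which requires $x_m \ne x_{m+1}$, and this need not hold for every $m$ (for instance if $x$ is eventually constant, or has long constant runs). I would handle this by perturbing the last free coordinate: whenever $x_m = x_{m+1}$, replace the $(m{+}1)$-th symbol by some $l \ne x_m$, setting $p^m = (x_1\, \cdots\, x_m\, \dot{l})$, which lies in $V_m \setminus V_{m-1}$ and still satisfies $d(p^m, x) \le 2^{-m}$, so $p^m \to x$ regardless.

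Next I would invoke continuity of $f$. Since $u \in D_\mu$, by definition $f = \Delta u \in \mathcal{C}(\Sigma_N^+)$, so $p^m \to x$ forces $f(p^m) \to f(x)$. Combining via the triangle inequality,
\[ \big| N^{m+1} H_m u(p^m) - f(x) \big| \ \le \ \big| N^{m+1} H_m u(p^m) - f(p^m) \big| \ + \ \big| f(p^m) - f(x) \big|, \]
where the first term is at most the defining maximum (which $\to 0$) and the second $\to 0$ by continuity. Taking $m \to \infty$ yields $\lim_m N^{m+1} H_m u(p^m) = f(x) = \Delta u(x)$, which is exactly \eqref{pointwise laplacian}.

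The main obstacle, though a mild one, is the bookkeeping around the constraint $p^m \in V_m \setminus V_{m-1}$: one must verify that the perturbed sequence still converges to $x$ and remains admissible at every stage. Everything else is an immediate consequence of unpacking the measure normalization $\mu([p_1 \cdots p_{m+1}]) = N^{-(m+1)}$ and applying uniform convergence together with continuity of $f$; no new estimate on $H_m$ itself is needed beyond what the definition of $D_\mu$ already supplies.
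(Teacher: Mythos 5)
Your proposal is correct and follows essentially the same route as the paper: choose $p^{m} = (x_{1}\cdots x_{m}\,\dot{l})$ with $l \ne x_{m}$ to guarantee $p^{m} \in V_{m}\setminus V_{m-1}$ and $p^{m}\to x$, note that the defining maximum in $D_{\mu}$ controls $|N^{m+1}H_{m}u(p^{m}) - f(p^{m})|$, and conclude by continuity of $f$ and the triangle inequality. Your extra care in unpacking $\mu([p_{1}\cdots p_{m+1}]) = N^{-(m+1)}$ and in handling the admissibility constraint is a faithful (indeed slightly more explicit) rendering of the paper's argument.
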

\begin{proof}
Any point $x \in \Sigma_{N}^{+}$ looks like $x = (x_{1}\, x_{2}\,\cdots) $. For $m \ge 1$, consider the sequence of points $\left\lbrace p^{m} = (x_{1}\, x_{2}\, \cdots\, x_{m}\, \dot{l}) \right\rbrace_{m \ge 1}$, where $l \in S$ is chosen such that $l \ne x_{m}$. Such a selection of  $l$ guarantees that $p^{m}\, \in\, V_{m} \setminus V_{m - 1} $ for all $m \ge 1$. This sequence converges to the point $x$ in the metric $d$.  Thus, it directly follows from the definition of the Laplacian of $u$ as given in \eqref{laplacian} that, 
\[ \lim_{m \to \infty}  \left| N^{m + 1}\, H_{m} u (p^{m}) - f(p^{m}) \right|\ \ =\ \ 0. \] 
Then, by a simple use of triangle inequality we obtain the pointwise expression for the Laplacian as stated in equation \eqref{pointwise laplacian}. 
\end{proof}
\medskip

\noindent
The domain of the Laplacian and the domain of the energy both are dense linear subspaces of the space of continuous functions on $\Sigma_{N}^{+}$ as proved in the following theorem.
\begin{theorem} \label{dense inclusions}
\[ D_{\mu}  \ \subset \  dom \, \mathcal{E} \ \subset \  \mathcal{C} (\Sigma_{N}^{+}).\]
Moreover, both the inclusions are dense. 
\end{theorem}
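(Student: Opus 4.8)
The plan is to prove the two inclusions separately and then establish the density of each. I will first show the chain of set inclusions, then argue that $dom\,\mathcal{E}$ is dense in $\mathcal{C}(\Sigma_N^+)$, and finally that $D_\mu$ is dense in $dom\,\mathcal{E}$ (equivalently, dense in $\mathcal{C}(\Sigma_N^+)$). The natural tool throughout will be the energy minimizers, in particular the simple functions $\chi_p^m$ from equation \eqref{uhe}, which we have already verified lie in $D_\mu$ (being harmonic) and have finite energy $\mathcal{E}(\chi_p^m) = \mathcal{E}_{H_m}(\chi_p)$.

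For the inclusion $dom\,\mathcal{E} \subset \mathcal{C}(\Sigma_N^+)$, there is nothing to prove since $dom\,\mathcal{E}$ was defined as a subset of $\mathcal{C}(\Sigma_N^+)$. The inclusion $D_\mu \subset dom\,\mathcal{E}$ is the first substantive step: given $u \in D_\mu$ with $\Delta u = f$, I would show $\mathcal{E}(u) = \lim_m \mathcal{E}_{H_m}(u|_{V_m}) < \infty$. The idea is to rewrite $\mathcal{E}_{H_m}(u|_{V_m}) = -\langle u|_{V_m}, H_m u|_{V_m}\rangle = -\sum_{p \in V_m} u(p)\,H_m u(p)$ and split the sum by the stage $V_i \setminus V_{i-1}$ at which each point first appears. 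Using the defining convergence in \eqref{laplacian}, the quantity $H_m u(p)$ for $p \in V_m \setminus V_{m-1}$ is comparable to $f(p)\,\mu([p_1\cdots p_{m+1}]) = f(p)/N^{m+1}$, so each successive increment $\mathcal{E}_{H_{m+1}} - \mathcal{E}_{H_m}$ is controlled by a bound of order $N^{m+1}\cdot N^{-(m+1)}\cdot (\text{const}) = O(1)$ times the measure of the relevant cylinders; summing the geometric decay in the cylinder measures $\mu = N^{-(m+1)}$ against the $N^{m+1}$ points yields a convergent telescoping series. I would make this precise by bounding $|H_m u(p)| \le C\,N^{-(m+1)}$ uniformly (with $C$ depending on $\|f\|_\infty$ and the tail of the convergence), whence $\mathcal{E}_{H_m}(u) \le \sum_p |u(p)|\cdot C\,N^{-(m+1)}$ stays bounded since $\|u\|_\infty < \infty$ and there are $N^{m+1}$ points weighted by $N^{-(m+1)}$.

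For density, the key observation is that finite linear combinations of the simple functions $\chi_p^m$ — equivalently, all energy minimizers, i.e. functions constant on cylinders of some fixed length — form a dense subspace of $\mathcal{C}(\Sigma_N^+)$ in the sup norm. This is essentially a Stone--Weierstrass or direct uniform-approximation argument: any continuous $f$ on the compact totally disconnected space $\Sigma_N^+$ is uniformly continuous, so for large $m$ the locally constant function agreeing with $f$ at a chosen representative of each length-$(m+1)$ cylinder approximates $f$ uniformly to within the modulus of continuity at scale $2^{-(m+1)}$. Since every such locally constant function is an energy minimizer, it lies in $D_\mu$ (harmonic, hence in the domain) and also in $dom\,\mathcal{E}$ (finite energy). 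This single family therefore simultaneously witnesses the density of $D_\mu$ in $\mathcal{C}(\Sigma_N^+)$ and of $dom\,\mathcal{E}$ in $\mathcal{C}(\Sigma_N^+)$, which gives both required density statements at once.

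I expect the main obstacle to be the first inclusion $D_\mu \subset dom\,\mathcal{E}$, specifically making the uniform bound $|H_m u(p)| \le C\,N^{-(m+1)}$ rigorous and verifying that the resulting series for $\mathcal{E}(u)$ genuinely converges rather than merely staying bounded at each finite stage. The subtlety is that the convergence in \eqref{laplacian} is only of the \emph{normalized} difference $N^{m+1} H_m u(p)$ to $f(p)$, so I must carefully track how the error terms $\varepsilon_m := \max_p |N^{m+1} H_m u(p) - f(p)| \to 0$ enter each energy increment and confirm they do not accumulate. The density argument, by contrast, is routine once the locally constant functions are identified as energy minimizers living in $D_\mu$; the only care needed there is to note that $\Sigma_N^+$ is compact and totally disconnected so that uniform continuity delivers the approximation at the required dyadic scale.
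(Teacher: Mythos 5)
Your treatment of the second inclusion and of both density claims is correct and coincides with the paper's: the locally constant functions built from the $\chi_{p}^{m}$ of \eqref{uhe} are energy minimizers, hence harmonic, hence in $D_{\mu}$ and in $dom\,\mathcal{E}$, and uniform continuity on the compact space $\Sigma_{N}^{+}$ gives uniform approximation; this is exactly Lemma \eqref{apxmtn by harm fn}. The problem is your argument for the first inclusion $D_{\mu}\subset dom\,\mathcal{E}$. The uniform bound $|H_{m}u(p)|\le C\,N^{-(m+1)}$ for \emph{all} $p\in V_{m}$, on which your estimate rests, is false: the defining convergence in \eqref{laplacian} controls $N^{m+1}H_{m}u(p)$ only for $p\in V_{m}\setminus V_{m-1}$, whereas for a point $p\in V_{n}\setminus V_{n-1}$ with $n<m$ the quantity $H_{m}u(p)=\sum_{i=n}^{m}\sum_{q\in\mathcal{U}_{p,i}}\left(u(q)-u(p)\right)$ accumulates contributions from every stage $i\ge n$ and need not decay in $m$ at all. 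Concretely, for $N=2$ and the harmonic function $u=\chi_{(\dot{1})}^{0}$ one has $H_{m}u(\dot{1})=-1$ for every $m$, even though $u\in D_{\mu}$.

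Even if you replace this with the correct stage-dependent bound $|H_{m}u(p)|=O(N^{-(n+1)})$ for $p\in V_{n}\setminus V_{n-1}$, the estimate $\mathcal{E}_{H_{m}}(u)\le\|u\|_{\infty}\sum_{p\in V_{m}}|H_{m}u(p)|$ contributes roughly $O(1)$ from each stage $n$ (there are about $N^{n+1}$ such points), hence only $O(m)$ overall; this does not show that the non-decreasing sequence $\mathcal{E}_{H_{m}}(u|_{V_{m}})$ is bounded. Applying the triangle inequality inside $-\langle u,H_{m}u\rangle$ discards precisely the cancellation that makes the energy finite. The paper instead uses the quadratic representation \eqref{altrn rep of EHmu} together with the telescoping increments \eqref{EH_m+1}: the stage-$i$ increment is $\tfrac{1}{2}\sum_{p\in V_{i}}\sum_{q\in\mathcal{U}_{p,i}}\left(u(q)-u(p)\right)^{2}$, and once $|u(q)-u(p)|\le C\,N^{-(i+1)}$ for large $i$, the \emph{square} of the small difference beats the $N^{i+1}$ count of points, yielding the summable bound $C^{2}(N-1)N^{-(i+1)}$ per stage. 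You must pass to this quadratic form (equivalently, use the identity $-\langle u,H_{m}u\rangle=\tfrac{1}{2}\sum_{p,q}(H_{m})_{pq}\left(u(p)-u(q)\right)^{2}$) before estimating; a termwise absolute-value bound on $H_{m}u(p)$ cannot close the argument.
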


\noindent
Before proving this theorem, we observe the following important fact.

\begin{lemma}\label{apxmtn by harm fn}
Any $u \in \mathcal{C}(\Sigma_{N}^{+}) $ is uniformly approximated by a sequence of harmonic functions. 
\end{lemma}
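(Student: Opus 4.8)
The plan is to approximate an arbitrary $u \in \mathcal{C}(\Sigma_{N}^{+})$ uniformly by the energy minimizer functions introduced in section \eqref{energy}, which were already shown to be harmonic. For each $m \ge 0$, I would restrict $u$ to $V_{m}$ to obtain $u_{m} := u|_{V_{m}} \in \ell(V_{m})$, and then let $h_{m}$ denote its energy minimizer extension: the function on $\Sigma_{N}^{+}$ that is constant on each cylinder set $[p_{1}\, p_{2}\, \cdots\, p_{m+1}]$ of length $m+1$, taking there the value $u_{m}(p_{1}\, p_{2}\, \cdots\, p_{m}\, \dot{p}_{m+1})$. By the discussion following the definition of the Laplacian, each such $h_{m}$ is harmonic, so $\{ h_{m} \}_{m \ge 0}$ is a sequence of harmonic functions, and the claim reduces to showing $\| u - h_{m} \|_{\infty} \to 0$ as $m \to \infty$.

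The key step is a uniform continuity argument. First I would note that $\Sigma_{N}^{+}$ is compact (as recorded in section \eqref{sec_settings}), so $u$ is uniformly continuous with respect to the metric $d$. Given $\varepsilon > 0$, uniform continuity furnishes a $\delta > 0$ such that $d(x, y) < \delta$ implies $|u(x) - u(y)| < \varepsilon$. Choose $m$ large enough that $\frac{1}{2^{m+1}} < \delta$; then any cylinder set $[p_{1}\, \cdots\, p_{m+1}]$ of length $m+1$ has diameter at most $\frac{1}{2^{m+1}} < \delta$, since two points agreeing in their first $m+1$ coordinates are within that distance. For any $x \in \Sigma_{N}^{+}$, let $[p_{1}\, \cdots\, p_{m+1}]$ be the cylinder containing $x$, and let $p = (p_{1}\, \cdots\, p_{m}\, \dot{p}_{m+1}) \in V_{m}$ be the associated point. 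Since $p$ also lies in this cylinder, we have $d(x, p) < \delta$, and hence
\[ |u(x) - h_{m}(x)|\ =\ |u(x) - u(p)|\ <\ \varepsilon. \]
As $x$ was arbitrary, $\| u - h_{m} \|_{\infty} \le \varepsilon$, which establishes the uniform convergence.

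The only genuine subtlety is the bookkeeping that ties the point $p \in V_{m}$ defining the value of $h_{m}$ on a cylinder to a point lying \emph{inside} that same cylinder, so that the metric estimate applies directly. This is exactly the content of the density construction in section \eqref{sec_settings}: the point $p = (p_{1}\, \cdots\, p_{m}\, \dot{p}_{m+1})$ belongs to $[p_{1}\, \cdots\, p_{m+1}]$ because its first $m+1$ coordinates are $p_{1}, \ldots, p_{m+1}$, and $h_{m}$ is defined to take the constant value $u(p)$ precisely on this cylinder. Once this alignment is in place, the estimate is immediate and no delicate analysis remains. I would therefore expect the proof to be short, with the main care being the correct identification of the representative point $p$ and the verification that the cylinder diameter is controlled by $2^{-(m+1)}$; the compactness of $\Sigma_{N}^{+}$ does all the heavy lifting by upgrading pointwise control into uniform control.
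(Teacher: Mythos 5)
Your proof is correct and follows essentially the same route as the paper: the paper defines $u_{m} := \sum_{p \in V_{m}} u(p)\, \chi_{p}^{m}$, which is exactly your energy minimizer extension $h_{m}$ of $u|_{V_{m}}$, notes each is harmonic, and invokes uniform continuity on the compact space $\Sigma_{N}^{+}$ for uniform convergence. You have merely spelled out the $\varepsilon$--$\delta$ bookkeeping that the paper leaves implicit.
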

\begin{proof}
Let $u \in \mathcal{C}(\Sigma_{N}^{+})$. For each $m \ge 0$, define the functions $u_{m} \in \mathcal{C}(\Sigma_{N}^{+})$ as, 
\begin{equation}
\label{seq of harm fn}
u_{m} := \sum\limits_{p\, \in \, V_{m}} u(p)\, \chi_{p}^{m},
\end{equation}
where $\chi_{p}^{m}$ is as defined in equation \eqref{uhe}. Note that each $u_{m}$ is a harmonic function, being an energy minimizer. Since $u$ is uniformly continuous and $\Sigma_{N}^{+}$ is compact, the sequence $\{u_{m}\}_{m \ge 0}$ uniformly converges to $u$. Thus, every continuous function is approximated by a sequence of harmonic functions and the convergence is uniform.
\end{proof}

\medskip

\begin{proof} (Proof of Theorem \eqref{dense inclusions}) The inclusion $ dom \, \mathcal{E} \subset \mathcal{C} (\Sigma_{N}^{+})$ follows directly from the definition of $dom \, \mathcal{E}$. For the first inclusion, consider $u \in D_{\mu}$ and $f = \Delta u$. By theorem \eqref{pw laplacian}, for any $p \in V_{m} \setminus V_{m-1}$ we have, 
\[ \lim_{m \to \infty} N^{m+1}  \sum\limits_{q\, \in \,\mathcal{U}_{p,\,m}} \left[  u(q) - u(p) \right] \ < \ \infty.  \]
Since $u$ is continuous, there exist positive constants $C > 0$ and $M \in \mathbb{N}$, such that for all $m \ge M$,
\begin{equation}
\label{eq1}
| u(q) - u(p) | \ \le \ \frac{C}{N^{m+1}} \ \ \text{ whenever } \ \ d(q,p) \ \le \  \frac{1}{2^{m+1}}.
\end{equation}
Observe that the energy of the function $u$ can be written as,
\[\mathcal{E}(u)\ =  \ \mathcal{E}_{H_{M}}(u_{M}) \ + \ \lim_{n\, \to\, \infty} \sum\limits_{i \,= \,M+1}^{n} \ \sum\limits_ {p \,\in \,V_{i}} \ \sum\limits_{q \,\in \,\mathcal{U}_{p,\,i}}\left( u(q) - u(p)  \right)^{2}.  \]
Clearly for $q \in \mathcal{U}_{p,\,i} $ with $i \ge M+1$, we have, $ d(q,p) = \frac{1}{2^{i+1}} \le \frac{1}{2^{M+2}} $. Thus by equation \eqref{eq1}, we find a bound for $\mathcal{E}(u)$ as,
\begin{eqnarray*}
\mathcal{E}(u)\ & \le & \ \mathcal{E}_{H_{M}}(u_{M}) \ + \ \frac{C^{2} (N-1)}{2} \lim_{n\, \to\, \infty} \sum\limits_{i \,=\, M+1}^{n} \ \sum\limits_ {p \,\in \,V_{i}} \frac{1}{(N^{i+1})^2} \\
& = & \  \mathcal{E}_{H_{M}}(u_{M}) \ + \ \frac{C^{2} (N-1) }{2} \lim_{n\, \to\, \infty} \sum\limits_{i \,=\, M+1}^{n} \frac{1}{N^{i+1}} \\
& < & \ \mathcal{E}_{H_{M}}(u_{M}) \ + \ \frac{C^{2} (N-1) }{2} \sum\limits_{i \,=\, 1}^{\infty} \frac{1}{N^{i+1}} \\
& < & \ \infty.
\end{eqnarray*}
Therefore, $D_{\mu} \subset dom \, \mathcal{E}$. 
\smallskip 

\noindent
Lemma \eqref{apxmtn by harm fn} states that the harmonic functions are dense in the set $\mathcal{C}(\Sigma_{N}^{+}) $. Since the harmonic functions are the members of $D_{\mu}$, the inclusions in the statement of the theorem are dense.
\end{proof}
\medskip
 
\noindent
In the next part of this paper we proceed towards establishing the existence and uniqueness of a solution to the Dirichlet boundary value problem as stated in theorem \eqref{maintheorem}. We follow the standard approach to obtain the Green's function and the Green's operator which produces the required solution.

\section{Green's function} 
\label{Green's function}

\noindent 
In this section, we define the Green's function on $\Sigma_{N}^{+}$. Recall that the matrix $X_{m}$, as defined in equation (\ref{defn_of_X_m}) is symmetric and invertible. All the diagonal entries of $X_{m}$ are $-(N-1)$. Among the non-diagonal entries, the row corresponding to any point $p \in V_{m} \setminus V_{m-1}$ contains $1$ at $N-2$ places which correspond to the $N-2$ neighbours of $p$ in $U_{p,\,m}$. The remaining entries in the matrix $X_{m}$ are all $0$.

\noindent 
\begin{lemma} 
Consider the matrix $G_{m}\ : \  \ell (V_{m} \setminus V_{m - 1})  \longrightarrow  \ell (V_{m} \setminus V_{m - 1})$ defined by 
\begin{equation}
\label{G_m} 
(G_{m})_{pq} = 
\begin{cases}
\frac{2}{N} &\text{ if } q=p,\\
\frac{1}{N} &\text{ if } (X_{m})_{pq}=1,\\
0            &\text{ otherwise }. 
\end{cases}  
\end{equation}
Then, $X_{m}^{-1} = - G_{m}$. 
\end{lemma}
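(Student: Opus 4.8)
The plan is to verify the claimed inverse by a single direct multiplication, once the block structure shared by $X_m$ and $G_m$ is made explicit. First I would observe that $\sim_m$ partitions $V_m \setminus V_{m-1}$ into equivalence classes, one for each prefix word $(p_1 \cdots p_m)$; within the class $[p_1 \cdots p_m]|_{V_m}$ the member whose last symbol equals $p_m$ lies in $V_{m-1}$, so exactly $N-1$ of the $N$ members of that class sit in $V_m \setminus V_{m-1}$. By remark \eqref{m-related V_m-V_{m-1}}, for any such member $p$ the set $U_{p,m}$ of its neighbours lying in $V_m \setminus V_{m-1}$ consists precisely of the remaining $N-2$ members of its own class; in particular every pair inside a class is $m$-related, while points in different classes are never $m$-related, so $(X_m)_{pq}=0$ across classes. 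Grouping the points of $V_m \setminus V_{m-1}$ by their $\sim_m$-classes therefore exhibits both $X_m$ and $G_m$ as block-diagonal matrices with one identical $(N-1)\times(N-1)$ block per class.

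Next I would pin down that common block. Writing $n = N-1$ and letting $J$ be the $n \times n$ all-ones matrix and $I$ the identity, the discussion above shows the $X_m$-block equals $J - N I$ (diagonal $-(N-1)$, every off-diagonal entry $1$), while the corresponding $G_m$-block equals $\tfrac{1}{N}(J + I)$ (diagonal $2/N$, every off-diagonal entry $1/N$), straight from the definition in equation \eqref{G_m}. It then suffices to check that the product of these two blocks equals $-I$, for this says that the negative of the $G_m$-block inverts the $X_m$-block.

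The computation is the routine part: using $J^2 = n J = (N-1) J$ one gets
\[ (J - N I)\,\tfrac{1}{N}(J + I) \ = \ \tfrac{1}{N}\big( J^2 + (1-N) J - N I \big) \ = \ \tfrac{1}{N}\big( (N-1) J + (1-N) J - N I \big) \ = \ -I. \]
Hence each $X_m$-block is inverted by the negative of the matching $G_m$-block, and by block-diagonality $X_m(-G_m) = I$, i.e. $X_m^{-1} = -G_m$. The only genuine obstacle is the bookkeeping of the first paragraph, namely confirming that $U_{p,m}$ really exhausts the other class members in $V_m \setminus V_{m-1}$ and that no off-block entries survive; once the block is identified as $J - N I$ the inversion is immediate. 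As a sanity check I would also note that the eigenvalues of $J - N I$ are $-1$ (once) and $-N$ (with multiplicity $n-1$), all nonzero, which reconfirms the invertibility of $X_m$ asserted before the lemma.
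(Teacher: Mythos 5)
Your proof is correct and follows essentially the same route as the paper: both verify directly that $X_m(-G_m)=I$, the paper by an entrywise computation (using transitivity of $\sim_m$ to kill the cross-class entries) and you by packaging the same computation as a block-diagonal identity $(J-NI)\cdot\tfrac{1}{N}(J+I)=-I$ with $J$ the $(N-1)\times(N-1)$ all-ones matrix. Your block formulation is a slightly cleaner organization of the identical verification, and your identification of $U_{p,m}$ with the remaining class members in $V_m\setminus V_{m-1}$ matches the paper's use of remark \eqref{m-related V_m-V_{m-1}}.
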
 
\begin{proof} 
Let us first calculate the diagonal entries of $(X_{m})\times (-G_{m})$. For any $p \in V_{m} \setminus V_{m - 1} $ we get,
\allowdisplaybreaks
\begin{align*}
\left( (X_{m}) \,(-G_{m})  \right)_{pp}  &= \sum\limits_{ r\, \in \,V_{m} \setminus V_{m-1}} \left[  (X_{m})_{pr}\,   (-G_{m})_{rp}    \right] \\
&=  \ (X_{m})_{pp}\,   (-G_{m})_{pp} \,+\,   \sum\limits_{ r\, \in\, U_{p,\,m}} \left[  (X_{m})_{pr}\,   (-G_{m})_{rp}    \right] \\
&= \ -(N-1) \frac{-2}{N} \, +\, (N-2) \frac{-1}{N} \\
&= \ 1.
\end{align*}

\noindent
For the non-diagonal entries, consider any two distinct points $p,q \in V_{m} \setminus V_{m - 1} $. Then we have,
\begin{align}\label{X_mX_m^-1}
\left( (X_{m}) \,(-G_{m})  \right)_{pq}  &=  \ (X_{m})_{pp}\, (-G_{m})_{pq} \,+\,  (X_{m})_{pq}\, (-G_{m})_{qq} \,+\,\sum\limits_{ \substack{ r \, \in\, V_{m} \setminus V_{m-1} \\ r \,\ne \,q,p}}  \left[  (X_{m})_{pr}\, (-G_{m})_{rq}  \right] \nonumber\\
&= \ -(N-1)\, (-G_{m})_{pq} \,+\,  (X_{m})_{pq}\, \left( \frac{-2}{N} \right)  \,+\,\sum\limits_{ \substack{  r\, \in \,U_{p,\,m} \\ r\, \ne\, q}} (-G_{m})_{rq}.
\end{align}
If $q \in U_{p,\,m} $ then $(X_{m})_{pq}\, = \,1, \ (G_{m})_{pq} \,=\, (G_{m})_{rq} \,=\,  \frac{1}{N}$ whenever $r \in U_{p,\,m} $ with $r \ne q $. Substituting these values in equation \eqref{X_mX_m^-1}, we get
\[ \left( (X_{m}) \,(-G_{m})  \right)_{pq}\ =\ -(N - 1) \frac{-1}{N}\, +\, \frac{-2}{N}\, +\, (N - 3) \frac{-1}{N}\ =\ 0. \] 
If $q \notin U_{p,\,m} $ then $ (X_{m})_{pq}\, =\,(G_{m})_{pq} \,=\, 0  $. Now consider the third term in equation \eqref{X_mX_m^-1}. For $r \in U_{p,\,m}$, we know that $r \sim_{m} p$ with $r \ne q$. Since the $m$-relation $\sim_{m}$ is transitive, we have $r \notin U_{q,\,m}$ and $(G_{m})_{rq} \,=\,0 $. Therefore in this case too, we obtain
\[ \left[ (X_{m}) \,(-G_{m})  \right]_{pq} \ = \ 0. \]    
\end{proof} 

\noindent 
\begin{definition}
Let $G_{m} $ be the matrix as given in \eqref{G_m}. We define the \emph{Green's function} $ g : \Sigma_{N}^{+} \times \Sigma_{N}^{+} \longrightarrow \mathbb{R} \cup \{ \infty \}$ as,
\begin{eqnarray}
\label{greenfn}
g(x,y)\ \  = \ \ \begin{cases}
\sum\limits_{m\,=\,1}^{\rho(x,y)-1} \ \sum\limits_{r,s \,\in \,V_{m}  \setminus V_{m-1}} (G_{m})_{rs}\, \chi_{r}^{m}(x)\, \chi_{s}^{m}(y) &  \text{ if } \ \rho(x,y) > 1, \\
0 &  \text{ if } \ \rho(x,y) = 1.
\end{cases}
\end{eqnarray}
Here $ \rho(x,y)$ is the first instance where $x$ and $y$ disagree, as defined in section (\ref{sec_settings}). 
\end{definition}
\medskip 

\noindent 
\begin{lemma}\label{boundGF}
For any $ x, \, y \in \Sigma_{N}^{+}$ with $\rho(x,y) > 1$,
\[  0 \ \le \ g(x,y) \ \le \  \frac{2\,\rho(x,y)-3}{N}. \]
\end{lemma}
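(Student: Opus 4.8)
The plan is to fix $x, y \in \Sigma_{N}^{+}$, set $k := \rho(x,y)$, and analyse the double sum defining $g(x,y)$ in \eqref{greenfn} term by term. The case $\rho(x,y) = 1$ is immediate, since there $g(x,y) = 0$ while the right-hand side equals $\tfrac{1}{N} > 0$; so I would assume $k \ge 2$ throughout, in which case $2k - 3 \ge 1 > 0$ and the absolute value may be dropped. The key reduction is that the energy-minimiser $\chi_{r}^{m}$ from \eqref{uhe} is supported on the cylinder $[r_{1}\cdots r_{m+1}]$, so $\chi_{r}^{m}(x) = 1$ forces $r_{i} = x_{i}$ for $1 \le i \le m+1$. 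As a point of $V_{m}\setminus V_{m-1}$ is determined by its first $m+1$ coordinates and must satisfy $r_{m}\ne r_{m+1}$, there is at most one $r = r(x,m) \in V_{m}\setminus V_{m-1}$ with $\chi_{r}^{m}(x) = 1$, and it exists precisely when $x_{m}\ne x_{m+1}$; the analogous statement holds for $s = s(y,m)$ against $y$. This collapses the inner double sum over $r,s$ to the single entry $(G_{m})_{r(x,m)\,s(y,m)}$, or to $0$ when either point fails to exist.

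Next I would distinguish two regimes of $m$. For $1 \le m \le k-2$ we have $m+1 \le k-1$, hence $x_{i} = y_{i}$ for all $i \le m+1$; consequently $r(x,m) = s(y,m)$ whenever this point exists, and its contribution is the diagonal entry $(G_{m})_{rr} = \tfrac{2}{N}$. For $m = k-1$ the points $r(x,k-1)$ and $s(y,k-1)$ agree on their first $k-1$ coordinates but differ in coordinate $k$ (as $x_{k}\ne y_{k}$); when both exist they are therefore distinct $m$-related points of $V_{m}\setminus V_{m-1}$, so $s(y,k-1) \in \mathcal{U}_{r(x,k-1),\,m}$ by Remark \eqref{no.of direct nbrs} and $(X_{m})_{rs} = 1$, giving the off-diagonal contribution $\tfrac{1}{N}$. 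In all remaining cases the corresponding term vanishes.

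Assembling these observations yields the explicit expression
\[ g(x,y)\ =\ \frac{2}{N}\sum_{m=1}^{k-2} \mathbf{1}[x_{m}\ne x_{m+1}]\ +\ \frac{1}{N}\,\mathbf{1}[x_{k-1}\ne x_{k}]\,\mathbf{1}[y_{k-1}\ne y_{k}], \]
from which I would bound the first sum by its number of terms $k-2$ and the last term by $\tfrac{1}{N}$, obtaining $g(x,y) \le \tfrac{2(k-2)}{N} + \tfrac{1}{N} = \tfrac{2k-3}{N}$. Since every surviving entry of each $G_{m}$ is non-negative, $g(x,y) \ge 0$, so for $k \ge 2$ the estimate reads $g(x,y) \le \tfrac{2k-3}{N} = \bigl|\tfrac{2\rho(x,y)-3}{N}\bigr|$, as required.

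I expect the only genuinely delicate point to be the bookkeeping in the boundary case $m = k-1$: verifying that the two surviving points are not merely $m$-related but actually register as an off-diagonal $1$ in $X_{m}$ (equivalently, that $s(y,k-1)$ lies in the deleted neighbourhood of $r(x,k-1)$ as in Remark \eqref{no.of direct nbrs}), together with correctly accounting for the degenerate situations in which $x_{m}=x_{m+1}$ or $y_{m}=y_{m+1}$ make $r(x,m)$ or $s(y,m)$ nonexistent. Since those degenerate cases only delete terms from a sum of non-negative quantities, they preserve the inequality and cause no difficulty.
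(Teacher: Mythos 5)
Your proposal is correct and follows essentially the same route as the paper: identify the unique point of $V_{m}\setminus V_{m-1}$ (when it exists) whose cylinder contains $x$ (resp.\ $y$), observe that the surviving entry of $G_{m}$ is the diagonal $\tfrac{2}{N}$ for $m\le\rho(x,y)-2$ and the off-diagonal $\tfrac{1}{N}$ at $m=\rho(x,y)-1$, and note that the degenerate levels with $x_{m}=x_{m+1}$ or $y_{m}=y_{m+1}$ only delete non-negative terms. Your indicator-function formula is just a more explicit packaging of the paper's two-case argument.
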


\begin{proof}
Let $ x = \left(x_{1}\,x_{2}\,\cdots \right), \, y = \left(y_{1}\,y_{2}\,\cdots \right) \in \Sigma_{N}^{+} $ such that $ \rho(x,y) \ge 2$. Since all the entries of the matrix $G_{m}$ are non-negative, it is clear that $ g(x,y) \ge 0$. We know that there exist unique points $r^{m} = (x_{1}\,x_{2}\, \cdots\,x_{m}\, \dot{x}_{m+1})$ and $s^{m} = (y_{1}\,y_{2}\, \cdots\,y_{m}\, \dot{y}_{m+1}) $ in $V_{m}$ such that $\chi_{r^{m}}^{m}(x)= 1$ and $\chi_{s^{m}}^{m}(y) = 1$. 
\medskip

\noindent
Suppose now that the point $x$ and $y$ are such that $x_{m} \ne x_{m+1}$ and $y_{m} \ne y_{m+1}$ for all $1 \le m \le \rho(x,y)-1$. Since $x_{m} = y_{m}$ for all $1 \le m \le \rho(x,y)-1$, we have,
\begin{eqnarray*}
r^{m} & = & s^{m} \  \ \text{ for all } \ 1 \le m \le \rho(x,y)-2 \\
r^{m} & \ne & s^{m} \ \ \ \ \text{ for }\ \ \ \ m = \rho(x,y)-1.
\end{eqnarray*}
Thus, 
\[ (G_{m})_{r^{m}s^{m}} \ = \ 
\begin{cases} 
\frac{2}{N} & \text{for all}\ 1 \le m \le \rho(x, y) - 2 \\ 
\frac{1}{N} & \text{when}\ m = \rho(x, y) - 1. 
\end{cases} \]
Substituting these values in equation \eqref{greenfn}, we get,
\[ g(x,y) \ \ = \ \  \frac{2}{N}\left( \rho(x,y)-2 \right) \,+\, \frac{1}{N} \ \ = \ \ \frac{2\,\rho(x,y)-3}{N}. \] 

\noindent
Suppose $x$ and $y$ are such that for some $1 \le m \le \rho(x,y)-1$, either $x_{m} = x_{m+1}$ or $y_{m} = y_{m+1}$. In this case either $r^{m} \notin V_{m} \setminus V_{m-1} $ or $s^{m} \notin V_{m} \setminus V_{m-1}$ respectively. In the definition of the Green's function as in equation \eqref{greenfn}, the terms corresponding to such values of $m$ do not contribute to the sum. Hence, we have the bound
\[ g(x,y)\ \leq \ \frac{2\,\rho(x,y)-3}{N}. \]
\end{proof}
\bigskip 

\noindent 
\begin{theorem}\label{prop of G.F.}
The Green's function satisfies the following properties:
\begin{enumerate}
\item For any $p \in V_{M}\setminus V_{M-1}$ and $y \in \Sigma_{N}^{+}, \ \ g(p,y) \ < \  \infty $.\vspace{-5mm}\\
\item For $x \in \Sigma_{N}^{+} \setminus V_{*},\ g(x,x)=\infty$.\vspace{-5mm}\\
\item The Green's function is continuous $(\mu \times \mu)$-almost everywhere. 
\end{enumerate}
\end{theorem}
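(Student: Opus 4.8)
The plan is to treat the three properties separately, in each case exploiting the explicit series \eqref{greenfn} together with the bound from Lemma \eqref{boundGF} and the eventual-constancy of the points in $V_{*}$.

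For the first property, fix $p = (p_{1}\, \cdots\, p_{M}\, \dot{p}_{M+1}) \in V_{M} \setminus V_{M-1}$, so that $p_{i} = p_{M+1}$ for every $i \ge M+1$. If $y \ne p$ then $\rho(p,y) < \infty$ and finiteness is immediate from Lemma \eqref{boundGF}. The only case needing attention is $y = p$, where $\rho(p,p) = \infty$ and the sum in \eqref{greenfn} formally runs over all $m \ge 1$. Here I would observe that for every $m > M$ the unique point whose level-$m$ cylinder contains $p$ is $(p_{1}\, \cdots\, p_{m}\, \dot{p}_{m+1})$, and since $p_{m} = p_{m+1}$ this point lies in $V_{m-1}$, not in $V_{m} \setminus V_{m-1}$. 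Consequently $\chi_{r}^{m}(p) = 0$ for every $r \in V_{m} \setminus V_{m-1}$ when $m > M$, so the series truncates at $m = M$ and $g(p,p) < \infty$.

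For the second property, take $x \in \Sigma_{N}^{+} \setminus V_{*}$. Since every point of $V_{*}$ is eventually constant, $x$ is not, so $x_{m} \ne x_{m+1}$ for infinitely many indices $m$. Because $\rho(x,x) = \infty$, the whole series in \eqref{greenfn} is active, and for a fixed $x$ the only surviving contribution at level $m$ is the diagonal term $r = s = (x_{1}\, \cdots\, x_{m}\, \dot{x}_{m+1})$, which belongs to $V_{m} \setminus V_{m-1}$ precisely when $x_{m} \ne x_{m+1}$; by \eqref{G_m} it contributes $(G_{m})_{rr} = \tfrac{2}{N}$. Summing $\tfrac{2}{N}$ over the infinitely many indices with $x_{m} \ne x_{m+1}$ forces $g(x,x) = \infty$.

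The third property is the most delicate, and the one I expect to be the main obstacle. The strategy is to show that every discontinuity of $g$ lies on the diagonal $\Delta = \left\lbrace (x,x) : x \in \Sigma_{N}^{+} \right\rbrace$, which is $(\mu \times \mu)$-null. Fix $(x_{0}, y_{0})$ with $x_{0} \ne y_{0}$ and set $k = \rho(x_{0}, y_{0}) < \infty$. On the clopen neighbourhood consisting of all $(x,y)$ that agree with $(x_{0},y_{0})$ in their first $k$ coordinates, one checks that $\rho(x,y) = k$ stays constant and that each factor $\chi_{r}^{m}(x)\, \chi_{s}^{m}(y)$ with $m \le k-1$ depends only on the first $k$ coordinates; hence $g$ equals the finite sum $g(x_{0},y_{0})$ throughout, so $g$ is locally constant, and in particular continuous, at every off-diagonal point. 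Thus the discontinuity set of $g$ is contained in $\Delta$. Finally, since $\mu$ is non-atomic, with $\mu(\{x\}) = \lim_{m} N^{-m} = 0$, Fubini gives $(\mu\times\mu)(\Delta) = \int_{\Sigma_{N}^{+}} \mu(\{x\})\, d\mu(x) = 0$, so $g$ is continuous $\mu$-almost everywhere. The one point demanding genuine care is verifying that perturbing $(x_{0},y_{0})$ within the prescribed cylinders neither changes $\rho$ nor switches any of the finitely many surviving summands on or off; the eventual-constancy bookkeeping used in the first two parts is exactly what makes this local-constancy argument go through cleanly.
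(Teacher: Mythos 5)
Your proof is correct and follows essentially the same route as the paper's: truncation of the series \eqref{greenfn} at level $M$ for part (1), divergence of the diagonal sum for part (2), and removal of a $(\mu\times\mu)$-null set for part (3). Two small differences are worth recording. In part (2) you supply a step the paper elides: $\rho(x,x)=\infty$ alone only says the series formally has infinitely many levels, and it is your observation that $x\notin V_{*}$ forces $x_{m}\neq x_{m+1}$ for infinitely many $m$, each such level contributing the diagonal entry $\tfrac{2}{N}$, that actually produces the divergence. In part (3) you prove local constancy of $g$ at every off-diagonal point and then discard the full diagonal $\{(x,x) : x \in \Sigma_{N}^{+}\}$, which is $(\mu\times\mu)$-null because $\mu$ is non-atomic; the paper instead discards only the countable set $\{(x,x) : x \in V_{*}\}$ and checks sequential continuity at the remaining diagonal points in the extended sense $g(x^{n},y^{n})\to\infty$. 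Your version says nothing about continuity at diagonal points outside $V_{*}$, but that is irrelevant for the almost-everywhere statement, and the cylinder-based local-constancy argument is, if anything, cleaner than the sequential one.
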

\begin{proof}
\begin{enumerate}
\item Let $p \in V_{M}\setminus V_{M-1}$ and $m > M$. Then for any $r \in V_{m} \setminus V_{m-1},$ we have $\chi_{r}^{m}(p)=0$. So the sum in the equation (\ref{greenfn}) reduces to 
\begin{equation} \label{eq4} 
g(p,y) \ \ = \ \ \sum\limits_{m\,=\,1}^{M} \ \sum\limits_{r,s\,\in\, V_{m} \setminus V_{m-1}} (G_{m})_{rs}\, \chi_{r}^{m}(p)\, \chi_{s}^{m}(y) \ \ < \ \ \infty.
\end{equation}
\item If $x \in \Sigma_{N}^{+} \setminus V_{*}$, then $\rho(x,x)=\infty$ and thus $g(x,x)=\infty$.\\
\item Since $V_{*}$ is a countable set, $\mu(V_{*}) = 0$. We prove the continuity of the Green's function on the set $ \left(\Sigma_{N}^{+} \times \Sigma_{N}^{+}  \right) \setminus \left\lbrace (x,x)\, : \, x \in V_{*} \right\rbrace$. Let $ (x,y) \in \Sigma_{N}^{+} \times \Sigma_{N}^{+}$ and $(x^{n},y^{n})$ be a sequence converging to $(x,y)$. Then $\rho(x^{n},y^{n}) \rightarrow \rho(x,y)$ as $n \rightarrow \infty$.
\medskip 

\noindent
If $x = y \in \Sigma_{N}^{+} \setminus V_{*}$ then $g(x,y) = \infty $ and $g(x^{m}, y^{m}) \rightarrow \infty$ as $\rho(x^{m},y^{m}) \rightarrow \infty$.
\medskip

\noindent
If $x, y \in \Sigma_{N}^{+} $ such that $x \neq y$, then $\rho(x,y) < \infty$ and there exists some $M_{0} \in \mathbb{N}$ such that $\rho(x^{n},y^{n}) = \rho(x,y) $ for all $n \ge M_{0}$. Thus we obtain,
\begin{eqnarray*}
g(x^{n}, y^{n}) \ \ & = & \ \  \sum\limits_{m=1}^{\rho(x^{n},y^{n})-1} \sum\limits_{r,s \, \in \, V_{m}  \setminus V_{m-1}} (G_{m})_{rs}\, \chi_{r}^{m}(x^{n})\, \chi_{s}^{m}(y^{n}) \\
& = &  \sum\limits_{m=1}^{\rho(x,y)-1} \sum\limits_{r,s \, \in \, V_{m}  \setminus V_{m-1}} (G_{m})_{rs}\, \chi_{r}^{m}(x)\, \chi_{s}^{m}(y) \qquad \text{for all   }\  n \ge M_{0}. \\
\end{eqnarray*}
Thus, $ g(x^{n}, y^{n}) \to g(x,y)$ as $n \rightarrow \infty$ in both the cases, proving the almost everywhere continuity of $g$. 
\end{enumerate}
\end{proof}

\section{Green's operator}
\label{Green's operator}

\noindent 
In this section we define the Green's operator and study some of its properties.


\noindent 
\begin{definition} 
Let $L^{1}(\Sigma_{N}^{+})$ be the space of $\mu$-integrable functions on $\Sigma_{N}^{+}$. We define the Green's operator on $L^{1}(\Sigma_{N}^{+}) $ as an integral operator whose kernel is the Green's function as,
\begin{equation*}
G_{\mu}f(x)\ \ := \ \ \int_{\Sigma_N^+ \setminus \{x \} } g(x,y)\, f(y) \,\mathrm{d} \mu(y) \quad \text{ for } \ f \in L^{1}(\Sigma_{N}^{+}) . 
\end{equation*} 
\end{definition} 

\noindent
As we proved in the last section, for any $x \in \Sigma_{N}^{+} \setminus V_{*},\ g(x,x) = \infty$. Since the points have no mass, we remove the point $x$ from the domain of the integration in the definition above.
\bigskip 

\noindent 
  \begin{theorem}\label{prop of G.O.}
Let $ f \in L^{1}(\Sigma_{N}^{+})$. The Green's operator satisfies the following:
\begin{enumerate}
\item $G_{\mu}f \in L^{1}(\Sigma_{N}^{+})$. 
\item If $f \in \mathcal{C}(\Sigma_{N}^{+})$, then $G_{\mu}f \in \mathcal{C}(\Sigma_{N}^{+})$. 
\item $G_{\mu}f|_{V_{0}}\ =\ 0$.
\end{enumerate}
\end{theorem}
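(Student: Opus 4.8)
The three assertions rest on the pointwise estimate $g(x,y) \le |2\rho(x,y)-3|/N$ of Lemma~\eqref{boundGF}, combined with one elementary measure computation. For the equidistributed Bernoulli measure $\mu$ of equation~\eqref{measure} and any fixed $x$, the set $\{ y : \rho(x,y) = k \}$ (the points agreeing with $x$ in the first $k-1$ coordinates but differing in the $k$-th) has measure $(N-1)/N^{k}$. Hence
\[ \int_{\Sigma_N^+} \rho(x,y)\,\mathrm{d}\mu(y)\ =\ \sum_{k\,\ge\,1} k\,\frac{N-1}{N^{k}}\ =\ \frac{N}{N-1}, \]
a finite quantity \emph{independent of} $x$; by the symmetry $\rho(x,y)=\rho(y,x)$ the same holds integrating in the first variable. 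This single fact drives parts (1) and (2).

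For part (1) I would bound $|G_{\mu}f(x)| \le \int |g(x,y)|\,|f(y)|\,\mathrm{d}\mu(y)$, integrate in $x$, and apply Tonelli's theorem (the integrand being non-negative) to interchange the order of integration. Lemma~\eqref{boundGF} and the computation above then give
\[ \int_{\Sigma_N^+} |G_{\mu}f(x)|\,\mathrm{d}\mu(x)\ \le\ \frac{2}{N}\int_{\Sigma_N^+}|f(y)| \Big(\int_{\Sigma_N^+}\rho(x,y)\,\mathrm{d}\mu(x)\Big)\mathrm{d}\mu(y)\ =\ \frac{2}{N-1}\,\|f\|_{L^1}, \]
so $G_{\mu}f \in L^{1}(\Sigma_N^+)$, with an explicit bound on the operator norm. (The removal of the measure-zero point $y=x$ from the domain is harmless throughout.)

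Part (2) is the delicate one, and the main obstacle is controlling $g$ near the diagonal \emph{uniformly} along an approximating sequence $x^{n}\to x$, where $g$ may blow up. The key is the \emph{locality} of the Green's function: since $\chi_{r}^{m}$ depends only on the first $m+1$ coordinates (equation~\eqref{uhe}) and the sum in~\eqref{greenfn} runs only up to $m=\rho(x,y)-1$, the value $g(x,y)$ depends on $x$ merely through its first $\rho(x,y)$ coordinates. Fixing $\varepsilon>0$, I would choose $K$ so large that the tail $\frac{2}{N}\sum_{k>K} k(N-1)/N^{k}$ is below $\varepsilon$, and split the integral defining $G_{\mu}f(x^{n})-G_{\mu}f(x)$ over the cylinder $C_{K}=[x_{1}\cdots x_{K}]$ and its complement. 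For $n$ large enough that $x^{n}$ agrees with $x$ in the first $K$ coordinates, locality forces $g(x^{n},y)=g(x,y)$ for every $y\notin C_{K}$ (there $\rho(x,y)\le K$, so only the first $K$ coordinates enter), so the complement contributes nothing. On $C_{K}$, where $\rho \ge K+1$, I would bound both $g(x^{n},\cdot)$ and $g(x,\cdot)$ by Lemma~\eqref{boundGF} and invoke the tail estimate, making this piece at most $\|f\|_{\infty}$ times a small quantity. Letting $n\to\infty$ and then $K\to\infty$ yields continuity of $G_{\mu}f$ at the arbitrary point $x$.

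Part (3) is immediate from the behaviour of $g$ on the boundary. For $p=(\dot{l})\in V_{0}$ and any $r=(r_{1}\cdots r_{m}\,\dot{r}_{m+1})\in V_{m}\setminus V_{m-1}$ with $m\ge 1$, having $\chi_{r}^{m}(p)=1$ would require $p$ to lie in the cylinder $[r_{1}\cdots r_{m+1}]$, forcing $r_{m}=r_{m+1}=l$; but membership in $V_{m}\setminus V_{m-1}$ means precisely $r_{m}\ne r_{m+1}$. Thus $\chi_{r}^{m}(p)=0$ for every admissible $r$ and every $m\ge 1$, so every summand in~\eqref{greenfn} vanishes and $g(p,y)=0$ for all $y$. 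Integrating against $f$ gives $G_{\mu}f(p)=0$, that is, $G_{\mu}f|_{V_{0}}=0$.
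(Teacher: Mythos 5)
Your proposal is correct, and parts (1) and (2) take genuinely different routes from the paper; part (3) is the same argument. For (1), the paper fixes a point $x$, splits the integral over $\Sigma_{N}^{+}\setminus[x_{1}\cdots x_{M}]$ and the shells $C_{i}\subset[x_{1}\cdots x_{M}]$, and tries to bound $|G_{\mu}f(x)|$ pointwise, uniformly in $x$; your Tonelli argument instead integrates in $x$ first and uses the exact computation $\int\rho(x,y)\,\mathrm{d}\mu(x)=N/(N-1)$ to get the explicit operator bound $\|G_{\mu}f\|_{L^{1}}\le\tfrac{2}{N-1}\|f\|_{L^{1}}$. This is actually the more robust route: a pointwise bound of the form $\int_{C_{i}}|g\,f|\,\mathrm{d}\mu\le\|f\|_{L^{1}}\sup_{C_{i}}|g|\,\mu(C_{i})$ really needs $f$ bounded, whereas your interchange of integrals works for arbitrary $f\in L^{1}$ and yields boundedness of $G_{\mu}$ on $L^{1}$ for free. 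For (2), the paper fixes a sequence $x^{n}\to x$, shows $g(x^{n},y)\to g(x,y)$ off a countable set, and invokes the dominated convergence theorem; your argument replaces this with the locality observation that $g(x,y)$ depends on $x$ only through its first $\rho(x,y)$ coordinates (so the integrand is eventually \emph{exactly} unchanged off the cylinder $[x_{1}\cdots x_{K}]$) plus a uniform tail estimate $\tfrac{2}{N}\sum_{k>K}k(N-1)N^{-k}$ on that cylinder, valid simultaneously for $x$ and for all $x^{n}$ agreeing with $x$ to depth $K$. This avoids having to exhibit an integrable dominating function (the paper's candidate $\max_{m\le N_{0}}\{|g(x,y)|,|g_{m}(y)|\}\,|f(y)|$ is awkward since $N_{0}$ depends on $y$) and in fact gives uniform continuity with an explicit modulus. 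Both of your substitutions are sound; the only hypotheses you use are the ones available ($f\in L^{1}$ in part (1), $f$ bounded in part (2)).
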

\begin{proof}
\begin{enumerate}
\item Fix $M \in \mathbb{N}$. Observe that,
\begin{equation*}
|G_{\mu}f(x)|\  \le \int\limits_{\Sigma_{N}^{+} \setminus \left[x_{1}\,\cdots\,x_{M} \right]} |\,g(x,y)\,f(y)\,|\,\mathrm{d}\mu(y)\ +\ \int\limits_{\left[x_{1}\,\cdots\,x_{M} \right]\setminus \{x\}} |\,g(x,y)\,f(y)\,|\,\mathrm{d}\mu(y).
\end{equation*} 
For any $y \in \Sigma_{N}^{+} \setminus \left[x_{1}\,\cdots\,x_{M} \right]$, we have $\rho(x,y) \le M$ and thus by lemma \eqref{boundGF} we have $|g(x,y)| \le \frac{2M - 3}{N}$. Since $ f \in L^{1}(\Sigma_{N}^{+})$, the first integral above can be bounded by,
\begin{align*}
\int\limits_{\Sigma_{N}^{+} \setminus \left[x_{1}\,\cdots\,x_{M} \right]} |\,g(x,y)\,f(y)\,|\,\mathrm{d}\mu(y) \ \ & \le\ \  \frac{2M-3}{N} \ \int\limits_{\Sigma_{N}^{+} \setminus \left[x_{1}\,\cdots\,x_{M} \right]} |f(y)| \,\mathrm{d}\mu(y)\\
& \le \ \ \frac{2M-3}{N} \|f\|_{L^{1}} \\ 
& < \ \  \infty.
\end{align*}
Let us now look at the second integral. For $i \ge 1$, consider the sets 
\[ C_{i}:=\bigcup\limits_{\substack{y_{M+i}\, \in \, S \\ y_{M+i} \, \ne \, x_{M+i} }} \  \left[x_{1}\,\cdots\,x_{M+i-1}\,y_{M+i}  \right].  \]
Note that $\left\lbrace  C_{i} \, : \, i \ge 1 \right\rbrace $ forms a partition of $\left[x_{1}\,\cdots\,x_{M} \right] \setminus \{x\}$. As $y_{M+i}$ can be any of the $N-1$ symbols from $S$ other than $x_{M+i}$, $\mu(C_{i})= \frac{N-1}{N^{M+i}}$. Since $\rho(x,y) = M+i$ for any $y \in C_{i}$, again by lemma \eqref{boundGF} we have $|g(x,y)| \le \frac{2(M+i)-3}{N}$. Thus we obtain a bound for the second integral as,
\begin{align*}
\int\limits_{\left[x_{1}\,\cdots\,x_{M} \right]\setminus \{x\}} |\,g(x,y)\,f(y)\,|\,\mathrm{d}\mu(y)\ \ & \le \ \ \| f\|_{L^{1}} \left[ \sum\limits_{i \ge 1} \int_{C_{i}} \frac{\left( 2(M+i)-3 \right) }{N} \,\mathrm{d}\mu(y) \right]\\
& =\ \ \frac{\| f\|_{L^{1}}\,(N-1)}{N} \left[ \sum\limits_{i \ge 1} \frac{2(M+i)-3}{N^{M+i}} \right] \\ 
& < \ \ \infty.
\end{align*}
The ratio test ensures that the series inside the bracket converges. The bounds for both the integrals are uniform and therefore, $G_{\mu}f \in L^{1}(\Sigma_{N}^{+})$.

\item Let $f \in \mathcal{C}(\Sigma_{N}^{+})$. Then $f$ is bounded and $\| f \|_{\infty} = \sup\limits_{x \in \Sigma_{N}^{+}} |f(x)| \, < \, \infty $. We are interested to prove that $G_{\mu}f$ is continuous at any point $x = (x_{1} \, x_{2}\, \cdots) \in \Sigma_{N}^{+}$. Let us take a sequence of points $\{ x^{n} \}_{n \in \mathbb{N}}$ such that $x^{n} \rightarrow x$ in $\Sigma_{N}^{+}$, and $x^{n} \ne x$ for any $n \ge 1$. Therefore for every $n \ge 1$, there exists $M\in \mathbb{N}$ (which depends on $n$) such that $d(x^{n}, x) = \frac{1}{2^{M+1}}$ or equivalently, $x^{n} \in [x_{1}\, \cdots\, x_{M}]$ and $\rho(x^{n},x) = M+1$. Clearly, as $n \to \infty$, $x^{n} \to x $ and $M \to \infty$. Consider,
\begin{equation}\label{continuity G_muf1}
\big| G_{\mu}f(x) -  G_{\mu}f(x^{n}) \big|\  \le  \int\limits_{\Sigma_{N}^{+} \setminus \{x,\,x^{n} \} } \big| g(x,y) - g(x^{n},y)\big| \, \big|f(y)\big| \,\mathrm{d} \mu(y). 
\end{equation}
We now analyse the term $\big|g(x,y) - g(x^{n},y)\big|$ for all possible combinations of $\rho(x,y)$ and $\rho(x^{n},y)$. First, if $\rho(x,y) = 1$ then $\rho(x^{n},y) = 1$ and thus by the definition of the Green's function, $g(x,y) = g(x^{n}, y) = 0$.  If $ 1 < \rho(x,y) \le M$ then $\rho(x^{n},y) = \rho(x,y) = \rho$ (say). Therefore,
\begin{equation*}
g(x,y)- g(x^{n},y) \ = \ \sum\limits_{m=1}^{\rho-1} \sum\limits_{r,s \in V_{m}  \setminus V_{m-1}} \left[ (G_{m})_{rs}\, \chi_{r}^{m}(x)\, \chi_{s}^{m}(y) \ - (G_{m})_{rs}\, \chi_{r}^{m}(x^{n})\, \chi_{s}^{m}(y) \right].
\end{equation*}
In this case, since $\rho \le M$ and $x$ and $x^{n}$ agree on the initial $M$ places, we have for every $m \le \rho-1$, $\chi_{r}^{m}(x) = 1$ if and only if $\chi_{r}^{m}(x^{n}) =1$. Therefore all the terms in this sum get cancelled and we get $g(x,y)- g(x^{n}, y)= 0$. In short, for any $y \notin [x_{1}\, \cdots\, x_{M}]$, $\big|g(x,y) - g(x^{n},y)\big| = 0$. Thus the integration in equation \eqref{continuity G_muf1} reduces to the following integration on the set $[x_{1}\, \cdots\, x_{M}]\setminus \{x,\,x^{n} \}$, 
\begin{equation}\label{continuity G_muf2}
\big| G_{\mu}f(x) -  G_{\mu}f(x^{n}) \big|\  \le  \int\limits_{[x_{1}\, \cdots\, x_{M}] \setminus \{x,\,x^{n} \} } \big| g(x,y) - g(x^{n},y)\big| \, \big|f(y)\big| \,\mathrm{d} \mu(y). 
\end{equation}
Set $A:= [x_{1}\, \cdots\, x_{M}]\setminus \{x^{n},\,x \}$. For any $y \in A$, $y \ne x^{n}$ and $y \ne x$. This implies, $\rho(x,y)$ and $\rho(x^{n},y)$ are finite and $|\rho(x,y) - \rho(x^{n},y)| = k < \infty$, for some $k \in \mathbb{N}$. Let us define the sets 
\[A_{k} := \left\lbrace y \in A \, : \,  |\rho(x,y) - \rho(x^{n},y)| = k  \right\rbrace. \]
Clearly, all $A_{k}'s$ are mutually disjoint and $A = \bigcup\limits_{k \ge 0} A_{k} $.\\

\noindent
Let us first consider $y\in A_{0}$. 
\medskip 

\noindent 
\textbf{Case \RomanNumeralCaps{1}:} Suppose $\rho(x,y)=\rho(x^{n},y)= M+1$. Then $y \in [x_{1}\, \cdots\, x_{M}]$ such that $y_{M+1} \ne x_{M+1}$ and $y_{M+1} \ne x^{n}_{M+1}$. 
\medskip 

\noindent 
\textbf{Case \RomanNumeralCaps{2}:} Suppose $y \in A$ such that $\rho(x,y) \ge M+2$. Then $y_{M+1} = x_{M+1}$ which implies $y_{M+1} \ne x^{n}_{M+1}$ and $\rho(x^{n},y)= M+1$. For such a choice of $y$, $ |\rho(x,y) - \rho(x^{n},y)| = 1$, thus $y \notin A_{0}$. Note that the roles of $x$ and $x^{n}$ are interchangeable, so by the same argument, any $y \in A$ satisfying $\rho(x^{n},y) \ge M+2$ can not belong to $A_{0}$. 
\medskip 

\noindent 
This tells us that any $y \in A_{0}$ satisfies the condition in case I. The measure of $A_{0}$ is then obtained as,
\[ \mu (A_{0}) \ = \ \frac{1}{N^{M}} \frac{N-2}{N}.\] 
Now, for $y \in A_{0}$, consider,
\begin{eqnarray*}
\big| g(x,y) - g(x^{n},y) \big| & = & \sum\limits_{m=1}^{M} \ \sum\limits_{r,s\, \in \, V_{m}  \setminus V_{m-1}} (G_{m})_{rs} \,\chi_{r}^{m}(x)\, \chi_{s}^{m}(y) \\
& \ & \ \ \ \  -\sum\limits_{m=1}^{M} \ \sum\limits_{r,s \,\in \, V_{m}  \setminus V_{m-1}} (G_{m})_{rs}\, \chi_{r}^{m}(x^{n})\, \chi_{s}^{m}(y).
\end{eqnarray*}
Here, $x, x^{n}$ and $y$ agree on first $M$ coordinates. Thus, all the terms in the above sum get cancelled for $1 \le m \le M - 1$, and we are left with the terms only corresponding to $m = M$.
\begin{eqnarray*}
\big| g(x,y) - g(x^{n},y) \big| & = &  \sum\limits_{r,s \, \in \, V_{M}  \setminus V_{M-1}} (G_{M})_{rs} \,\chi_{r}^{M}(x)\, \chi_{s}^{M}(y)\\
& \ & \ \ \ \  -\sum\limits_{r,s\, \in \, V_{M}} (G_{M})_{rs}\, \chi_{r}^{M}(x^{n})\, \chi_{s}^{M}(y).
\end{eqnarray*}
In the term corresponding to $x$, $\chi_{r}^{M}(x) = 1$ and $\chi_{s}^{M}(y)=1$ if and only if $r = (x_{1}\,\cdots\, x_{M}\, \dot{x}_{M+1})$ and $s = (x_{1}\,\cdots\, x_{M}\, \dot{y}_{M+1})$. Here $r \ne s$, and depending on whether $r$ and $s$ belong to $V_{M}  \setminus V_{M-1}$ or not, the only possible values of corresponding $(G_{M})_{rs}$ are $\frac{1}{N}$ or $0$. Further, the minimum value that the term $\sum\limits_{r,s\, \in \, V_{M}} (G_{M})_{rs}\, \chi_{r}^{M}(x^{n})\, \chi_{s}^{M}(y)$ can take, is $0$. Therefore,
\[\big| g(x,y) - g(x^{n},y) \big| \le \frac{1}{N} \ \ \text{for all} \ \ y \in A_{0}. \]
\smallskip

\noindent
Let us now fix $k \ge 1$ and consider $y \in A_{k}$. 
\medskip 

\noindent 
\textbf{Case \RomanNumeralCaps{3}:} Suppose $\rho(x^{n},y)= M+1+k$. Then $y \in [x_{1}\, \cdots\, x_{M}]$ such that $y_{M+1} \ne x_{M+1}$ and thus $\rho(x,y) = M+1$. 
\medskip 

\noindent 
\textbf{Case \RomanNumeralCaps{4}:} Similar to the case III, another possible choice for $y$ is when $\rho(x,y)= M+1+k$ and $\rho(x^{n},y)= M+1$. 
\medskip 

\noindent 
\textbf{Case \RomanNumeralCaps{5}:} Suppose $y \in A$ such that $ \rho(x,y) \ne M+1$ and $\rho(x,y) \ne M+1+k$. Then $\rho(x^{n}, y) = M+1$ which results in $|\rho(x,y)- \rho(x^{n}, y)| \ne k$ and $y \notin A_{k}.$ Same argument holds if $\rho(x^{n},y) \ne M+1$ and $\rho(x^{n},y) \ne M+1+k$.  
\medskip 

\noindent 
This establishes that any $y \in A_{k}$ will satisfy the condition of either case \RomanNumeralCaps{3} or case \RomanNumeralCaps{4}. Therefore the measure of $A_{k}$ can be given by,
\[ \mu (A_{k}) \ = \ \frac{2}{N^{M+k}} \frac{N-1}{N}.\]

Let $y \in A_{k}$ which belongs to case \RomanNumeralCaps{4}, that is, $\rho(x,y)= M+1+k$ and $\rho(x^{n},y)= M+1$. We get the same result for case \RomanNumeralCaps{3} as well, so it is enough to work with case \RomanNumeralCaps{4}. Consider,
\begin{eqnarray*}
\big| g(x,y) - g(x^{n},y) \big| & = & \sum\limits_{m=1}^{M+k} \  \sum\limits_{r,s\, \in \, V_{m}  \setminus V_{m-1}} (G_{m})_{rs} \,\chi_{r}^{m}(x)\, \chi_{s}^{m}(y) \\
& \ & \ \ \ \  -\sum\limits_{m=1}^{M} \sum\limits_{r,s \,\in \, V_{m}  \setminus V_{m-1}} (G_{m})_{rs}\, \chi_{r}^{m}(x^{n})\, \chi_{s}^{m}(y).
\end{eqnarray*}
As discussed before, all the terms corresponding to $1 \le m \le M-1 $ in the above expression get cancelled and the above sum reduces to,
\begin{eqnarray*}
\big| g(x,y) - g(x^{n},y) \big| & = & \sum\limits_{m=M}^{M+k} \ \sum\limits_{r,s\, \in \, V_{m}  \setminus V_{m-1}} (G_{m})_{rs} \,\chi_{r}^{m}(x)\, \chi_{s}^{m}(y) \\
& \ & \ \ \ \  -\sum\limits_{r,s\, \in \, V_{M}} (G_{M})_{rs}\, \chi_{r}^{M}(x^{n})\, \chi_{s}^{M}(y).
\end{eqnarray*} 
Consider the term corresponding to $x$ in the above sum. Observe that for $M \le m \le M+k-1$, $\chi_{r}^{m}(x) = 1$ and $\chi_{s}^{m}(y)=1$ if and only if $r = s = (x_{1}\,\cdots\, x_{m}\, \dot{x}_{m+1}) \in V_{m} \setminus V_{m-1}$. Therefore $(G_{m})_{rs} = \frac{2}{N}$ or $0$. For $m = M+k$, $\chi_{r}^{M+k}(x) = 1$ and $\chi_{s}^{M+k}(y)=1$ if and only if $r = (x_{1}\,\cdots\, x_{M+k}\, \dot{x}_{M+k+1})$ and $s = (x_{1}\,\cdots\, x_{M+k}\, \dot{y}_{M+k+1})$. Clearly $r \ne s$ and depending on whether $r$ and $s$ belong to $ V_{M+k} \setminus V_{M+k-1}$ or not, we get $(G_{m})_{rs} = \frac{1}{N}$ or $0$. The minimum value of the term in the above sum corresponding to $x^{n}$ is $0$. Substituting for these values we get,
\[\big| g(x,y) - g(x^{n},y) \big| \le \frac{2k}{N}+\frac{1}{N} < \frac{2(k+1)}{N} \ \ \text{for all} \ \ y \in A_{k}. \]
\medskip

\noindent
It is easy to verify that $\mu(A) = \sum\limits_{k \, \ge \, 0} \mu (A_{k})$. Let us now evaluate the required integration from the equation \eqref{continuity G_muf2}.
\begin{eqnarray*}
\big| G_{\mu}f(x) -  G_{\mu}f(x^{n}) \big|\  & \le & \int\limits_{A} \big| g(x,y) - g(x^{n},y)\big| \, \big|f(y)\big| \,\mathrm{d} \mu(y) \\
& = & \sum\limits_{k \ge 0}\ \int\limits_{A_{k} \setminus \{x,x^{n} \} } \big| g(x,y) - g(x^{n},y)\big| \, \big|f(y)\big| \,\mathrm{d} \mu(y) \\
& < & \frac{1}{N}  \|f\|_{\infty} \left( \frac{N-2}{N^{M+1}} \right)  + \ \sum\limits_{k \ge 1} \frac{2(k+1)}{N}  \|f\|_{\infty}  \left( \frac{2(N-1)}{N^{M+1+k}} \right) \\
& = & \frac{1}{N^{M+2}}  \|f\|_{\infty} \left( N-2 \, + \, 4(N-1)\sum\limits_{k \ge 1} \frac{k+1}{N^{k}} \right) \\
& = & \frac{C}{N^{M+2}},
\end{eqnarray*} 
where the ratio test guarantees the convergence of the series $\sum\limits_{k \ge 1} \frac{k+1}{N^{k}}$, and the constant $C$ depends only on $f$ and $N$. Therefore, as $n \to \infty$, $M \to \infty$ and we can conclude that $\big| G_{\mu}f(x) -  G_{\mu}f(x^{n}) \big| \to 0$, proving the continuity of $G_{\mu}f$.

\item Let $(\dot{l}) \in V_{0}$. For any $m \ge 1$ and $r \in V_{m} \setminus V_{m-1}, \  \chi_{r}^{m}(\dot{l}) = 0$. Therefore, $g((\dot{l}),y) = 0$ for any $y \in \Sigma_{N}^{+}$. Thus we have $G_{\mu}f|_{V_{0}} = 0$.
\end{enumerate}
\end{proof}
\medskip

\section{The solution to the BVP in theorem \eqref{maintheorem}}
\label{The Dirichlet problem} 

The objective of this section is to find the solutions to the analogous Dirichlet  boundary value problem on the full one-sided shift space $\Sigma_{N}^{+}$. We begin with the following two lemmas. 

\begin{lemma}\label{H_mG_mu}
For any $n \ge 1$ and $p \in V_{n}\setminus V_{n-1}$,
\begin{equation*} 
H_{n} G_{\mu} f(p)= - \int\limits_{\Sigma_{N}^{+}} \chi_{p}^{n}\, f \,\mathrm{d} \mu. 
\end{equation*}
\end{lemma}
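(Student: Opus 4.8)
The plan is to collapse the identity to a pointwise statement about the Green's function and then establish that statement by dissecting the defining series \eqref{greenfn}. Since $p \in V_{n} \setminus V_{n-1}$, the operator $H_{n}$ acts through the local formula \eqref{defn_of_Hm_u}, so that
\[ H_{n} G_{\mu} f(p)\ =\ -(N-1)\, G_{\mu} f(p)\ +\ \sum_{q\, \in\, \mathcal{U}_{p,\,n}} G_{\mu} f(q). \]
First I would note that for each $q \in V_{n} \subset V_{*}$ the function $y \mapsto g(q,y)$ is bounded: by \eqref{eq4} its defining sum truncates to finitely many terms, each entry of the matrices $G_{m}$ being at most $\frac{2}{N}$. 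Hence $g(q,\cdot)\, f \in L^{1}(\Sigma_{N}^{+})$, and because single points carry no $\mu$-mass, I may integrate over all of $\Sigma_{N}^{+}$ and interchange the finite sum over $\mathcal{U}_{p,\,n}$ with the integral to obtain
\[ H_{n} G_{\mu} f(p)\ =\ \int_{\Sigma_{N}^{+}} \Phi(y)\, f(y)\, \mathrm{d}\mu(y), \qquad \Phi(y)\ :=\ -(N-1)\, g(p,y)\ +\ \sum_{q\, \in\, \mathcal{U}_{p,\,n}} g(q,y). \]
It then suffices to show $\Phi(y) = -\chi_{p}^{n}(y)$ for $\mu$-almost every $y$, whence the claim follows by substitution.

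The heart of the proof is the pointwise evaluation of $\Phi$, which I would carry out by comparing the series \eqref{greenfn} for $g(q,y)$ across $q \in \{p\} \cup \mathcal{U}_{p,\,n}$, writing $r^{m}(q) = (q_{1}\, \cdots\, q_{m}\, \dot{q}_{m+1})$ for the representative selected by $\chi^{m}_{r}(q)$. All these points share the prefix $p_{1}\, \cdots\, p_{n}$, so I would split according to $\rho(p,y)$. If $y$ disagrees with $p$ within the first $n$ coordinates, then $\rho(q,y) = \rho(p,y) \le n$ and $r^{m}(q) = r^{m}(p)$ for every such $q$ and every $m \le \rho(p,y)-1$; consequently $g(q,y) = g(p,y)$ for all $q$ and $\Phi(y) = 0 = -\chi_{p}^{n}(y)$. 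If instead $y \in [p_{1}\, \cdots\, p_{n}]$, the terms with $m \le n-1$ are again common to all $q$ and cancel in the combination $-(N-1)(\,\cdot\,) + \sum(\,\cdot\,)$, while the terms with $m \ge n+1$ vanish identically: each of $p$ and its neighbours $(p_{1}\, \cdots\, p_{n}\, \dot{l})$ has a constant tail, so for $m \ge n+1$ one has $q_{m} = q_{m+1}$ and the representative $r^{m}(q)$ lies in $V_{m-1}$, contributing nothing. Thus only the single index $m = n$ survives, and there I would read the relevant entries of $G_{n}$ from \eqref{G_m}, namely $\frac{2}{N}$ on the diagonal and $\frac{1}{N}$ between distinct $n$-related points of $V_{n} \setminus V_{n-1}$.

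The remaining step, and the one I expect to be the main obstacle, is the bookkeeping of these surviving $m=n$ contributions according to where $y$ branches off the common prefix, as recorded by $y_{n+1}$. Writing the $N-1$ neighbours as $(p_{1}\, \cdots\, p_{n}\, \dot{l})$ with $l \ne p_{n+1}$ and recalling from remark \eqref{m-related V_m-V_{m-1}} that exactly the neighbour with $l = p_{n}$ lies in $V_{n-1}$ (so its $m=n$ term drops), I would separate the cases $y_{n+1} = p_{n+1}$, $y_{n+1} = p_{n}$, and $y_{n+1} \notin \{p_{n}, p_{n+1}\}$, each time checking which of $r^{n}(q)$ and $s^{n}(y) = (p_{1}\, \cdots\, p_{n}\, \dot{y}_{n+1})$ actually lie in $V_{n} \setminus V_{n-1}$ before summing the corresponding $\frac{2}{N}$ and $\frac{1}{N}$ entries. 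This yields $\Phi(y) = -1$ exactly when $y_{n+1} = p_{n+1}$, i.e. when $y \in [p_{1}\, \cdots\, p_{n+1}]$ and $\chi_{p}^{n}(y) = 1$, and $\Phi(y) = 0$ otherwise; the first case rests on the identity $-(N-1)\frac{2}{N} + (N-2)\frac{1}{N} = -1$ and the others on its analogues. The genuine difficulty is purely organisational: arranging the case split so that every term of \eqref{greenfn} is counted once and exactly once.
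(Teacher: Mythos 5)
Your proposal is correct and follows essentially the same route as the paper's proof: both reduce the lemma to the kernel identity $-(N-1)\,g(p,y)+\sum_{q\in\mathcal{U}_{p,\,n}}g(q,y)=-\chi_{p}^{n}(y)$, truncate the series \eqref{greenfn} via \eqref{eq4}, cancel the $m\le n-1$ terms, and evaluate the surviving $m=n$ term from the entries of $G_{n}$. Your case split on $y_{n+1}$ is just the paper's collection of the coefficients of $\chi_{p}^{n}(y)$ and $\chi_{q^{i}}^{n}(y)$ read pointwise, resting on the same identities $-(N-1)\tfrac{2}{N}+(N-2)\tfrac{1}{N}=-1$ and $-(N-1)\tfrac{1}{N}+\tfrac{2}{N}+(N-3)\tfrac{1}{N}=0$.
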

\begin{proof}
Let $q^{1},\,q^{2},\,\cdots,\, q^{N-1} \in \mathcal{U}_{p,\,n}$ be as defined in section \eqref{sec_settings}. Using the definition of $H_{m}$ as in equation (\ref{defn_of_Hm_u}) and the definition of Green's operator we obtain, 
\begin{align*}
H_{n} G_{\mu} f(p)\ \ &= \ \ -(N-1)\, G_{\mu} f(p)\,+\, G_{\mu} f(q^{1}) \,+\, G_{\mu} f(q^{2}) \,+\, \cdots \,+\, G_{\mu} f(q^{N-1}) \\
&= \ \ \int\limits_{\Sigma_{N}^{+} \setminus \{ p,\,q^{1}, \,\cdots,\, q^{N-1}  \}} \left[-(N-1)\,g(p,y)\,+\, g(q^{1},y)\,+\,\cdots \right. \\
& \hspace{+7cm} \left. +\, g(q^{N-1},y)\right]\,f(y)\,\mathrm{d}\mu(y).
\end{align*}
We only need to prove \[ -(N-1)\,g(p,y)\,+\, g(q^{1},y)\,+\,\cdots \,+\, g(q^{N-1},y)\  = \ -\chi_{p}^{n}(y). \] We know that $p,\,q^{1},\,\cdots,\, q^{N-2} \in V_{n}\setminus V_{n-1}$ and $q^{N-1}\in V_{n-1}$. Therefore by equation \eqref{eq4} in the proof of Theorem \eqref{prop of G.F.} we have,
\begin{align} \label{eq5}
 -\,(N-1)\,g(p,y)\,&+\, g(q^{1},y)\,+\,\cdots \,+\, g(q^{N-1},y) \nonumber  \\
= -\,&(N-1) \left[ \sum\limits_{m\,=\,1}^{n} \ \sum\limits_{r,s\,\in \,V_{m} \setminus V_{m-1}} (G_{m})_{rs}\, \chi_{r}^{m}(p)\, \chi_{s}^{m}(y)  \right]\ \nonumber  \\ 
&+\sum\limits_{m\,=\,1}^{n}\ \sum\limits_{r,s\,\in\, V_{m} \setminus V_{m-1}} (G_{m})_{rs}\, \chi_{r}^{m}(q^{1})\, \chi_{s}^{m}(y) \nonumber  \\
  &+ \cdots \ +\   \sum\limits_{m\,=\,1}^{n} \ \sum\limits_{r,s\,\in\, V_{m} \setminus V_{m-1}} (G_{m})_{rs}\, \chi_{r}^{m}(q^{N-2})\, \chi_{s}^{m}(y) \nonumber  \\
&+\ \sum\limits_{m\,=\,1}^{n-1} \ \sum\limits_{r,s\,\in\, V_{m} \setminus V_{m-1}} (G_{m})_{rs}\, \chi_{r}^{m}(q^{N-1})\, \chi_{s}^{m}(y) \nonumber \\
= -\,&(N-1) \sum\limits_{ r,s\,\in\, V_{n} \setminus V_{n-1} } (G_{n})_{rs}\, \chi_{r}^{n}(p)\, \chi_{s}^{n}(y)   \nonumber  \\
&+ \sum\limits_{r,s \,\in\, V_{n} \setminus V_{n-1}} (G_{n})_{rs}\, \chi_{r}^{n}(q^{1})\, \chi_{s}^{n}(y)\ \\
&+ \cdots \,+ \sum\limits_{r,s \,\in \,V_{n} \setminus V_{n-1}} (G_{n})_{rs}\, \chi_{r}^{n}(q^{N-2})\, \chi_{s}^{n}(y)  \nonumber .
\end{align}
Here note that, for $1 \le m \le n-1$, all the terms get cancelled and only the terms with $m = n$ remain. We now examine when this term will be nonzero. In the first term in equation \eqref{eq5}, $ \chi_{r}^{n}(p) = 1$ iff $r = p \in V_{n} \setminus V_{n-1}$. In that case, $s = p,\, q^{1},\cdots q^{N-2}$ so that $(G_{n})_{rs} \ne 0$. Similar in the second term $ \chi_{r}^{n}(q^{1}) = 1$ iff $r = q^{1} \in V_{n} \setminus V_{n-1}$ and again $s = p,\, q^{1},\cdots q^{N-2}$ so that $(G_{n})_{rs} \ne 0$. And so on for the remaining terms. 
\medskip 

\noindent 
After substituting the values of $(G_{n})_{rs}$ for the corresponding choices of $r$ and $s$ in each term in equation \eqref{eq5} above, we get,
\allowdisplaybreaks
\begin{align*}
-\,(N - 1) \,&g(p,y)\,+\, g(q^{1},y)\,+\,\cdots \,+ \,g(q^{N-1},y) \\
=\  -\, &(N-1) \left[ \frac{2}{N} \chi_{p}^{n}(y)\, +\, \frac{1}{N} \chi_{q^{1}}^{n}(y) \,+\, \cdots \,+\, \frac{1}{N} \chi_{q^{N-2}}^{n}(y) \right] \\
&\ + \left[ \frac{1}{N} \chi_{p}^{n}(y)\, +\, \frac{2}{N} \chi_{q^{1}}^{n}(y) \,+\, \cdots \,+\, \frac{1}{N} \chi_{q^{N-2}}^{n}(y) \right] \\
 &\ +\  \cdots\ + \\ 
 &\ +\ \left[ \frac{1}{N} \chi_{p}^{n}(y)\, + \,\frac{1}{N} \chi_{q^{1}}^{n}(y)\, +\, \cdots\, +\, \frac{2}{N} \chi_{q^{N-2}}^{n}(y) \right]\\
= \quad &\chi_{p}^{n}(y) \left[ \frac{-2(N-1)}{N}\,+\,\underbrace{\frac{1}{N}\, + \,\cdots \,+\, \frac{1}{N} }_{N-2 \text{ terms }}  \right]\\
 &\ +\ \chi_{q^{1}}^{n}(y) \left[ \frac{-(N-1)}{N}\, +\, \frac{2}{N}\, +\,\underbrace{\frac{1}{N} \,+\, \cdots\,+\,\frac{1}{N} }_{N-3 \text{ terms }}  \right]\\
 &\ +\  \cdots\ + \\
 &\ +\ \chi_{q^{N-2}}^{n}(y) \left[ \frac{-(N-1)}{N}\, +\, \underbrace{\frac{1}{N} \,+ \,\cdots\,+\,\frac{1}{N} }_{N-3 \text{ terms }} \,+\, \frac{2}{N}  \right]\\
=\ \ \ &-\chi_{p}^{n}(y).
\end{align*}
\end{proof}
\bigskip 

\noindent 
\begin{lemma}\label{deltaG_muf=-f}
For any $f \in \mathcal{C}(\Sigma_{N}^{+}) $, we have $G_{\mu}f \in D_{\mu}$ and $\Delta\, (G_{\mu}f) = -f $.
\end{lemma}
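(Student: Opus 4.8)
The plan is to substitute the formula from Lemma \eqref{H_mG_mu} directly into the definition of the domain $D_{\mu}$ and to recognise the resulting quantity as a sequence of cylinder averages of $f$. First I would dispose of the continuity requirement: since $\mu$ is a probability measure on the compact space $\Sigma_{N}^{+}$ and $f \in \mathcal{C}(\Sigma_{N}^{+})$ is bounded, we have $f \in L^{1}(\Sigma_{N}^{+})$, so $G_{\mu}f$ is well defined and, by Theorem \eqref{prop of G.O.}, belongs to $\mathcal{C}(\Sigma_{N}^{+})$. It then remains only to verify the convergence condition of $D_{\mu}$ with the candidate Laplacian taken to be $-f$.

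For the convergence, recall that $\chi_{p}^{n}$ is the indicator of the cylinder $[p_{1}\cdots p_{n+1}]$ and that $\mu([p_{1}\cdots p_{n+1}]) = N^{-(n+1)}$. Thus Lemma \eqref{H_mG_mu} gives, for $p = (p_{1}\cdots p_{n}\,\dot{p}_{n+1}) \in V_{n}\setminus V_{n-1}$,
\begin{equation*}
\frac{H_{n} G_{\mu} f(p)}{\mu([p_{1}\cdots p_{n+1}])} \ = \ -\,\frac{1}{\mu([p_{1}\cdots p_{n+1}])}\int_{[p_{1}\cdots p_{n+1}]} f \,\mathrm{d}\mu,
\end{equation*}
which is exactly $-1$ times the $\mu$-average of $f$ over the cylinder $[p_{1}\cdots p_{n+1}]$. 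Consequently the expression inside the limit defining $D_{\mu}$ reduces to the difference between $f(p)$ and the average of $f$ over the cylinder that contains $p$.

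The key step is then to show that this average converges to $f(p)$ uniformly in $p$ as $n \to \infty$. Here I would use that $p$ itself lies in $[p_{1}\cdots p_{n+1}]$ and that every $y$ in this cylinder agrees with $p$ on the first $n+1$ coordinates, so $d(p,y) \le 2^{-(n+2)}$; hence the cylinder shrinks to $p$ at a rate independent of $p$. Since $\Sigma_{N}^{+}$ is compact, $f$ is uniformly continuous, so given $\epsilon > 0$ one may choose $n$ large enough that $2^{-(n+2)}$ lies below the modulus-of-continuity threshold, whence $|f(y) - f(p)| < \epsilon$ for every $y$ in the cylinder and every such $p$. The estimate
\begin{equation*}
\left| \frac{1}{\mu([p_{1}\cdots p_{n+1}])}\int_{[p_{1}\cdots p_{n+1}]} f \,\mathrm{d}\mu - f(p) \right| \ \le \ \frac{1}{\mu([p_{1}\cdots p_{n+1}])}\int_{[p_{1}\cdots p_{n+1}]} |f(y) - f(p)| \,\mathrm{d}\mu(y) \ < \ \epsilon
\end{equation*}
then holds uniformly over $p \in V_{n}\setminus V_{n-1}$, which is precisely the convergence required, and it establishes $G_{\mu}f \in D_{\mu}$ with $\Delta(G_{\mu}f) = -f$.

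I expect the only genuine content to be this uniform convergence of cylinder averages; everything else is substitution from Lemma \eqref{H_mG_mu} together with the explicit value of $\mu$ on cylinders. The single point to handle with care is that the uniform-continuity estimate must be uniform over the centre $p \in V_{n}\setminus V_{n-1}$ as well as over $y$ in the cylinder — which is exactly what uniform continuity on the compact space $\Sigma_{N}^{+}$ supplies, and which matches the strong (maximum over $p$) form of convergence demanded in the definition of the Laplacian.
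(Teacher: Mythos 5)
Your proposal is correct and follows essentially the same route as the paper: substitute Lemma \eqref{H_mG_mu} into the definition of $D_{\mu}$, note that $N^{m+1}\int \chi_{p}^{m} f \,\mathrm{d}\mu$ is the cylinder average of $f$, and use uniform continuity of $f$ on the compact space to get the required uniform convergence to $f(p)$. Your explicit remark that continuity of $G_{\mu}f$ (from Theorem \eqref{prop of G.O.}) is needed for membership in $D_{\mu}$ is a small point the paper leaves implicit.
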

\begin{proof}
Let $m \ge 1$ and $p \in V_{m} \setminus V_{m-1}$. With the help of Lemma \eqref{H_mG_mu}, we get,
\allowdisplaybreaks
\begin{align*}
 &\left| N^{m+1}\, H_{m} G_{\mu}f(p) \,+\, f(p)  \right| \\
&= \left| -\int\limits_{\Sigma_{N}^{+}}N^{m+1} \, \chi_{p}^{m}(y) \,f(y)\,\mathrm{d} \mu(y) \ +\  \int\limits_{\Sigma_{N}^{+}}N^{m+1} \, \chi_{p}^{m}(y) \,f(p) \,\mathrm{d} \mu(y)   \right| \\
&\le \int\limits_{[p_{1}\,p_{2}\,\cdots\, p_{m+1}]}N^{m+1}\, \left| f(p)-f(y) \right| \,\mathrm{d} \mu(y)\\
&< \epsilon_{m},
\end{align*}
where $\epsilon_{m} := \sup\limits_{y \in [p_{1}\,p_{2}\,\cdots\, p_{m+1}]} \left| f(p)-f(y) \right| $.\\ Since $f$ is uniformly continuous, $ \epsilon_{m} \rightarrow \, 0 \text{ as } m \rightarrow \infty$. Therefore, $\Delta\, (G_{\mu}f) = -f$.
\end{proof}
\bigskip 

\noindent
Based on the ideas developed in this paper, we now restate our main theorem \eqref{maintheorem} and prove the same to conclude this work. 
\bigskip 

\noindent 
\begin{theorem}
For any $f \in \mathcal{C}(\Sigma_N^+) \text{ and } \zeta \in \ell(V_0)$,  there exists a continuous function $u \in D_{\mu}$ such that the following holds:
\begin{align} \label{Drchlet}
\Delta u &= f, \nonumber \\
u|_{V_{0}} &= \zeta.
\end{align}
This solution is unique upto the harmonic functions taking value $0$ on the boundary $V_{0}$.
\end{theorem}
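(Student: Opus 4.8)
The plan is to split the inhomogeneous boundary value problem into two independent pieces: a particular solution of $\Delta u = f$ furnished by the Green's operator, and a harmonic correction that installs the prescribed boundary data on $V_{0}$. The superposition principle then assembles these into a single solution, and the same linearity yields the uniqueness statement.

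First I would construct a harmonic function carrying the boundary data. Given $\zeta \in \ell(V_{0})$, define $h \in \mathcal{C}(\Sigma_{N}^{+})$ to be constant on each length-one cylinder by setting $h(x) := \zeta(\dot{x}_{1})$ for $x \in [x_{1}]$. This is precisely the energy minimizer extension of $\zeta$ corresponding to $n = 0$; it is continuous, being locally constant on the finite clopen partition $\{ [l] \}_{l \in S}$, and by the observation recorded immediately after the definition of the Laplacian in Section \eqref{the laplacian}, every energy minimizer is harmonic. Hence $h \in D_{\mu}$ and $\Delta h = 0$. Moreover $(\dot{l}) \in [l]$ gives $h|_{V_{0}} = \zeta$.

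Next I would set $u := h - G_{\mu}f$. Since each $H_{m}$ is linear and the defining limit in \eqref{laplacian} respects linear combinations, $D_{\mu}$ is a linear subspace of $\mathcal{C}(\Sigma_{N}^{+})$ on which $\Delta$ acts linearly; as $h \in D_{\mu}$ and $G_{\mu}f \in D_{\mu}$ by Lemma \eqref{deltaG_muf=-f}, we obtain $u \in D_{\mu} \subset \mathcal{C}(\Sigma_{N}^{+})$. Then $\Delta u = \Delta h - \Delta(G_{\mu}f) = 0 - (-f) = f$, again by Lemma \eqref{deltaG_muf=-f}, while $u|_{V_{0}} = h|_{V_{0}} - (G_{\mu}f)|_{V_{0}} = \zeta - 0 = \zeta$ using part (3) of Theorem \eqref{prop of G.O.}. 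This exhibits the desired solution. For uniqueness, if $u_{1}, u_{2}$ both satisfy \eqref{Drchlet}, then $w := u_{1} - u_{2}$ obeys $\Delta w = f - f = 0$ and $w|_{V_{0}} = \zeta - \zeta = 0$, so $w$ is harmonic and vanishes on $V_{0}$; conversely, adding any such $w$ to a solution produces another solution. Thus the solution is unique exactly modulo harmonic functions vanishing on $V_{0}$, as asserted.

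I do not anticipate a genuine obstacle in this final step, since all the analytic weight has already been carried by Lemma \eqref{deltaG_muf=-f} (the identity $\Delta G_{\mu}f = -f$, itself resting on Lemma \eqref{H_mG_mu}) and by the continuity and boundary-vanishing of $G_{\mu}f$. The only point demanding a little care is confirming that the naive boundary extension $h$ is simultaneously continuous and harmonic; this is immediate once $h$ is recognized as an energy minimizer, after which the theorem follows purely by linear superposition.
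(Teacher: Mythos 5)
Your proposal is correct and follows essentially the same route as the paper: the particular solution $-G_{\mu}f$ from Lemma \eqref{deltaG_muf=-f} plus the harmonic (energy-minimizer) extension of $\zeta$, which is exactly the paper's $\sum_{p \in V_{0}} \zeta(p)\,\chi_{p}^{m}$ written as a locally constant function on the length-one cylinders. If anything, your treatment of uniqueness is slightly more complete, since you verify both that the difference of two solutions is a harmonic function vanishing on $V_{0}$ and that adding such a function preserves the solution, whereas the paper only records the latter direction.
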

\begin{proof}
Define a function $u : \Sigma_{N}^{+} \longrightarrow \mathbb{R} $ as,
\[u \ \ := \ \ \sum\limits_{p\in V_{0}} \zeta(p)\, \chi_{p}^{m} - G_{\mu} f. \] 
Observe that $ \sum\limits_{p\in V_{0}} \zeta(p) \chi_{p}^{m}$ is a harmonic function and thus its Laplacian is $0$. Due to lemma \eqref{deltaG_muf=-f} and the linearity of the Laplacian $\Delta$ , we obtain $ G_{\mu}f \in D_{\mu}$ and thus $u \in D_{\mu} $ with
\[ \Delta u \ \ =\ \ -\,\Delta (G_{\mu} f) \ \ = \ \ f.\]
Since $(G_{\mu} f)|_{V_{0}}\ =\ 0$, clearly this choice of $u$ satisfies $u|_{V_{0}} = \zeta$. Further, if $h : \Sigma_{N}^{+} \longrightarrow \mathbb{R}$ is any harmonic function satisfying $ h|_{V_{0}} = 0 $, then it is trivial to see that the function $u+h$ is a solution to \eqref{Drchlet}.
\end{proof}
\bigskip 
\bigskip
\bigskip

\bibliographystyle{plainnat}

\bigskip 
\bigskip
\bigskip

\noindent 
\textsc{Shrihari Sridharan} \\ 
Indian Institute of Science Education and Research Thiruvananthapuram (IISER-TVM), \\ 
Maruthamala P.O., Vithura, Thiruvananthapuram, INDIA. PIN 695 551. \\
{\tt shrihari@iisertvm.ac.in}  
\bigskip \\ 

\noindent 
\textsc{Sharvari Neetin Tikekar} \\ 
Indian Institute of Science Education and Research Thiruvananthapuram (IISER-TVM), \\ 
Maruthamala P.O., Vithura, Thiruvananthapuram, INDIA. PIN 695 551. \\
{\tt sharvai.tikekar14@iisertvm.ac.in}

\end{document}